\DeclareMathAlphabet{\mathpzc}{OT1}{pzc}{m}{it}
\newcommand{\sr}{{\tfrac12}}
\newcommand{\ve}{\mathscr{U}}
\newcommand{\R}{\mathbb{R}}
\newcommand{\Y}{\mathpzc{Y}}
\newcommand{\N}{\mathpzc{N}}
\newcommand{\M}{\mathpzc{M}}
\newcommand{\vero}{\texttt{v}}
\newcommand{\wero}{\texttt{w}}
\newcommand{\C}{\mathcal{C}}
\newcommand{\D}{\mathcal{D}}
\newcommand{\V}{\mathbb{V}}
\newcommand{\T}{\mathscr{T}}
\newcommand{\Ss}{\mathscr{S}}
\newcommand{\HL}{ \mbox{ \raisebox{7.4pt} {\tiny$\circ$} \kern-10.3pt} {H_L^1} }
\newcommand{\Wp}{ \mbox{ \raisebox{7.7pt} {\scriptsize$\circ$} \kern-10.1pt} {W^{1,p}} }
\newcommand{\Wpp}{ \mbox{ \raisebox{7.7pt} {\scriptsize$\circ$} \kern-10.1pt} {W^{1,p'}} }
\newcommand{\Sz}{ \mbox{ \raisebox{7.5pt} {\scriptsize$\circ$} \kern-10.1pt} {\Ss} }
\newcommand{\HLnew}{ \mbox{ \raisebox{7pt} {\scriptsize$\circ$} \kern-10.1pt}{H}^1_L }
\newcommand{\HLn}{{\mbox{\,\raisebox{5.1pt} {\tiny$\circ$} \kern-9.1pt}{H}^1_L  }}
\newcommand{\HLs}{{\mbox{\raisebox{8.7pt} {\scriptsize$\circ$} \kern-10.1pt}{H}^1_L  }}
\newcommand{\verot}{\emph{\texttt{v}}}
\newcommand{\Tr}{\mathbb{T}}
\DeclareMathOperator*{\tr}{tr_\Omega}
\DeclareMathOperator*{\Span}{span}
\newcommand{\Laps}{(-\Delta)^s}
\newcommand{\DIV}{\textrm{div}}
\newcommand{\diff}{\, \mbox{\rm d}}
\newcommand{\ie}{i.e.,\@\xspace}
\newcommand{\Hs}{\mathbb{H}^s(\Omega)}
\newcommand{\Ws}{\mathbb{H}^{1-s}(\Omega)}
\newcommand{\Hsd}{\mathbb{H}^{-s}(\Omega)}
\newcommand{\A}{\mathcal{A}}
\newcommand{\polN}{{\mathbb{N}}}
\newcommand{\Wmpo}{ \mbox{ \raisebox{7.4pt} {\tiny$\circ$} \kern-10.7pt} {W_p^m} }
\newtheorem{theorem}{Theorem}[section]
\newtheorem{lemma}[theorem]{Lemma}
\newtheorem{proposition}{Proposition}[section]
\theoremstyle{definition}
\newtheorem{definition}[theorem]{Definition}
\theoremstyle{remark}
\newtheorem{remark}[theorem]{Remark}
\numberwithin{equation}{section}
\newcommand{\calT}{{\mathcal{T}}}
\newcommand{\calI}{{\mathcal{I}}}
\DeclareMathOperator*{\supp}{supp}
\newcommand{\Nin}{\,{\mbox{\,\raisebox{6.2pt} {\tiny$\circ$} \kern-11.1pt}\N }}
\newcommand{\Ninn}{{\mbox{\,\raisebox{4.5pt} {\tiny$\circ$} \kern-8.8pt}\N }}
\begin{document}

\title[Multilevel methods]{Multilevel methods for nonuniformly elliptic operators}

\author[L.~Chen]{Long Chen}
\address[L.~Chen]{Department of Mathematics, University of California at Irvine, Irvine, CA 92697, USA}
\email{chenlong@math.uci.edu}
\thanks{LC has been supported by NSF Grant DMS-1115961 and DOE prime award \# DE-SC0006903.}

\author[R.H.~Nochetto]{Ricardo H.~Nochetto}
\address[R.H.~Nochetto]{Department of Mathematics and Institute for Physical Science and Technology,
University of Maryland, College Park, MD 20742, USA}
\email{rhn@math.umd.edu}
\thanks{RHN and AJS have been supported in part by NSF grant DMS-1109325.}

\author[E.~Ot\'arola]{Enrique Ot\'arola} 
\address[E.~Ot\'arola]{Department of Mathematics, University of Maryland, College Park, MD 20742, USA}
\email{kike@math.umd.edu}
\thanks{EO has been supported in part by the Conicyt-Fulbright Fellowship Beca Igualdad de Oportunidades
and NSF grant DMS-1109325}

\author[A.J.~Salgado]{Abner J.~Salgado}
\address[A.J.~Salgado]{Department of Mathematics, University of Tennessee, Knoxville, TN 37996, USA}
\email{asalgad1@utk.edu}

\subjclass[2010]{65N55;   
65F10;   
65N22;   
65N30;   
35S15;   
65N12.   
}

\date{\today}

\keywords{Finite elements, weighted Sobolev spaces, Muckenhoupt weights, anisotropic estimates, multilevel methods}

\begin{abstract}
We develop and analyze multilevel methods for nonuniformly elliptic operators whose ellipticity 
holds in a weighted Sobolev space with an $A_2$--Muckenhoupt weight. Using 
the so-called Xu--Zikatanov (XZ) identity, we derive a nearly uniform convergence result, under 
the assumption that the underlying mesh is quasi-uniform. We also consider the so-called $\alpha$-harmonic extension 
to localize fractional powers of elliptic operators. Motivated by the scheme proposed in 
[\bibentry{NOS}]
we present a multilevel method with line smoothers and obtain a nearly uniform convergence result on anisotropic meshes.
Numerical experiments reveal a competitive performance of our method.
\end{abstract}

\maketitle
\section{Introduction}
\label{sec:introduction}
Recently, a great deal of attention has been paid to the study of fractional and nonlocal
operators, both from the point of view of pure mathematical research as well as motivated by
several interesting applications where they constitute a fundamental part of the
modeling and simulation of complex phenomena that span vastly different length scales.

Fractional and nonlocal operators can be found in a number of applications such as boundary control problems
\cite{Duvaut}, finance \cite{Carr.Geman.ea2002,Zhang2007}, electromagnetic fluids \cite{McCN:81},
image processing \cite{GO:08}, materials science \cite{Bates}, optimization \cite{Duvaut},
porous media flow \cite{CG:93}, turbulence \cite{Bakunin}, peridynamics
\cite{DGLZ:12,DGLZ:13,Silling}, nonlocal continuum field theories \cite{Eringen} and others.
From this it is evident that the particular type of operator appearing in applications can widely vary and
that a unified analysis of their discretizations might be well beyond our reach. A more modest, but
nevertheless quite ambitious, goal is to develop an analysis  and approximation of a model operator
that is representative of a particular class. This is the purpose of our recent research program, in which
we deal with an important nonlocal operator --- fractional powers of the Dirichlet Laplace operator $\Laps$,
with $s \in (0,1)$, which for convenience we will simply call the fractional Laplacian.
The fractional Laplace operator has applications where long range or anomalous diffusion is
considered as in the flow in porous media \cite{Water:00}, or the theory of stochastic processes~\cite{Bertoin}.

In previous work \cite{NOS} we proposed a discretization technique for this operator and provided an
\emph{a priori} error analysis for it. In this paper, we shall be interested in fast multilevel methods
for the approximate solution of the discrete problems that arise from the discretization of the fractional Laplacian.
In other words, we shall be concerned with efficient solution
techniques for discretizations of the following problem. Let $\Omega$ be an open and bounded subset
of  $\R^n$ ($n\ge1$), with boundary $\partial\Omega$. Given $s\in (0,1)$ and a smooth enough function $f$,
find $u$ such that
\begin{equation}
\label{fl=f_bdddom}
  \begin{dcases}
    \Laps u = f, & \text{in } \Omega, \\
    u = 0, & \text{on } \partial\Omega.
  \end{dcases}
\end{equation}

The fractional Laplacian is a nonlocal operator (see \cite{Landkof,CS:07,CS:11}), which is one
of the main difficulties to study and solve problem \eqref{fl=f_bdddom}.
To localize it, Caffarelli and Silvestre showed in \cite{CS:07} that any power
of the fractional Laplacian in $\R^n$ can be determined as a
Dirichlet-to-Neumann operator via an extension problem on
the upper half-space $\R^{n+1}_+$.
For a bounded domain $\Omega$, this result has been adapted
in \cite{CDDS:11,ST:10}, thus obtaining an extension problem which is now posed
on the semi-infinite cylinder $\C = \Omega \times (0,\infty)$.
This extension is the following mixed boundary value problem:
\begin{equation}
\label{alpha_harm_intro}
  \begin{dcases}
    \DIV\left( y^\alpha \nabla \ve \right) = 0, & \text{in } \C, \\
    \ve = 0, & \text{on } \partial_L \C, \\
    \frac{ \partial \ve }{\partial \nu^\alpha} = d_s f, & \text{on } \Omega \times \{0\}, \\
  \end{dcases}
\end{equation}
where $\partial_L \C= \partial \Omega \times [0,\infty)$ denotes
the lateral boundary of $\C$, and
\begin{equation}
\label{def:lf}
\frac{\partial \ve}{\partial \nu^\alpha} = -\lim_{y \rightarrow 0^+} y^\alpha \ve_y,
\end{equation}
is the the so-called conormal exterior derivative of $\ve$ with $\nu$ being the unit outer normal to
$\C$ at $\Omega \times \{ 0 \}$. The parameter $\alpha$ is defined as
\begin{equation}
\label{eq:defofalpha}
  \alpha = 1-2s \in (-1,1).
\end{equation}
Finally, $d_s$ is a positive normalization constant
which depends only on $s$; see \cite{CS:07} for details. We will call $y$ the
\emph{extended variable} and the dimension $n+1$ in $\R_+^{n+1}$ the
\emph{extended dimension} of problem \eqref{alpha_harm_intro}.

The following simple strategy to find the solution of \eqref{fl=f_bdddom}
has been proposed and analyzed in \cite{NOS}: given a sufficiently smooth function $f$ we solve
\eqref{alpha_harm_intro}, thus obtaining a function $\ve = \ve(x',y)$.
Setting $u: x' \in \Omega \mapsto u(x') = \ve(x',0) \in \R$, we obtain
the solution of \eqref{fl=f_bdddom}.

For an overview of the existing numerical techniques used to solve problems involving
fractional diffusion such as the matrix transference technique
and the contour integral method, we refer to \cite{BP:13,NOS}.
In addition to \cite{NOS}, two other works that deal with the discretization of fractional powers of elliptic operators
have subsequently appeared:
the approach given by Bonito and Pasciak in \cite{BP:13} is based on the integral formulation for
self-adjoint operators discussed, for instance, in \cite[Chapter 10.4]{BS}; the work by del Teso and
V\'azquez \cite{Felix} studies the approximation of the $\alpha$-harmonic extension problem via the
finite difference method.

The main advantage of the algorithm proposed in \cite{NOS}, is that we are solving
the local problem \eqref{alpha_harm_intro} instead
of dealing with the nonlocal operator $\Laps$ of problem \eqref{fl=f_bdddom}.
However, this comes at the expense of incorporating one more dimension to the problem,
thus raising the question of how computationally efficient this approach is.
A quest for the answer is the motivation for the study of multilevel methods, since it is known
that they are the most efficient techniques for the solution of discretizations of partial differential
equations, see \cite{Brandt1977,Brandt1984,Hackbusch1985,Xu1992siamreview}. Multigrid methods for equations of the type
\eqref{alpha_harm_intro}, however, are not very well understood.

The purpose of this work is twofold and hinges on the multilevel
framework developed in \cite{BPWX:91,Xu1992siamreview} and the Xu-Zikatanov identity \cite{XuZ02}. First, we show
nearly uniform convergence of a multilevel method for a class of general nonuniformly elliptic equations
on quasi-uniform meshes. Second, we derive
an almost uniform convergence of a multilevel method for
the local problem that arises from our PDE approach to the fractional Laplacian
\eqref{alpha_harm_intro} on anisotropic meshes \cite{NOS,NOS2}.
The former result assumes that the weight $\omega$ in the differential operator belongs to
the so-called Muckenhoupt class $A_2$; see Definition~\ref{def:Muckenhoupt} for details.
A somewhat related work is \cite{Griebel2007} where the authors show a uniform norm equivalence for
a multilevel space decomposition under the assumption that the weight belongs to the smaller class $A_1$.
Their results and techniques, however, do not apply to our setting since, simply put, an
$A_1$-weight is ``almost bounded'', which is too restrictive; see Remark~\ref{rem:ApinAr} for details.
We make no regularity assumption on the weight $\omega$ and show that our estimates solely depend on
the $A_2$-constant $C_{2,\omega}$. However, our results depend on the number $J$ of levels, and thus logarithmically
on the meshsize, which seems unavoidable without further regularity assumptions. For the fractional
Laplacian, \cite{NOS} shows that a quasi-uniform mesh cannot yield quasi-optimal error estimates and,
consequently, the mesh in the extended dimension must be graded towards the bottom of the cylinder thus becoming
anisotropic. We apply line smoothers over vertical lines in the extended domain and prove that the corresponding
multigrid $\mathcal V$-cycle converges almost uniformly.

We propose an algorithm with complexity $\mathcal O(M^{n+1}\log M)$ for computing
a nearly optimal approximation of
the fractional Laplacian problem \eqref{fl=f_bdddom} in $\mathbb R^{n}$, where $M$
denotes the number of degrees of freedom in each direction. Notice that
using the intrinsic integral formulation of the fractional Laplacian  \cite{CS:11,CS:07},
a discretization would result in a dense matrix with $\mathcal O(M^{2n})$. Special techniques such as
fast multipole methods \cite{GR:87}, the $\mathcal H$-matrix methods \cite{H:99}
or wavelet methods \cite{HS:09,S:09} might be applied to reduce the complexity of storage and
manipulation of the dense matrix as well as the complexity of solvers.

The outline of this paper is as follows.
In Section~\ref{sec:Prelim}, we introduce the notation and functional framework we shall work with.
Section~\ref{sec:FEM_degenerate} contains the salient results about the finite element approximation of
nonuniformly elliptic equations including the fractional Laplacian on anisotropic meshes. Here
we also collect the relevant properties of a quasi-interpolant which are crucial to obtain the convergence
analysis of our multilevel methods.
In Section~\ref{sec:MG_degenerate}, we recall the theory of subspace corrections
\cite{Xu1992siamreview} and the Xu-Zikatanov identity \cite{XuZ02}. We present multigrid algorithms
for nonuniformly elliptic equations discretized
on quasi-uniform meshes in Section \ref{sec:MGQunif} and prove their nearly uniform convergence.
We adapt the algorithms and analysis of Section \ref{sec:MGQunif} to the fractional Laplacian discretized on anisotropic meshes
in Section~\ref{sec:MGLaps}. This requires a line smoother along the extended direction.
Finally, to illustrate the performance of our methods and the sharpness of our results, we present a series
of numerical experiments in Section~\ref{sec:Numerics}.

\section{Notation and preliminaries}
\label{sec:Prelim}

\subsection{Notation}
\label{sub:sec:notations}

Throughout this work, $\Omega$ is an open, bounded and connected subset of $\R^n$, with $n\geq1$.
The boundary of $\Omega$ is denoted by $\partial\Omega$. Unless specified otherwise, we will assume
that $\partial\Omega$ is Lipschitz.
We define the semi-infinite cylinder
\[
 \C = \Omega \times (0,\infty),
\]
and its lateral boundary
\[
 \partial_L \C = \partial \Omega \times [0,\infty).
\]
Given $\Y>0$, we define the truncated cylinder
\begin{equation}
  \label{trunccylinder}
  \C_\Y = \Omega \times (0,\Y).
\end{equation}
The lateral boundary $\partial_L\C_\Y$ is defined accordingly.

Throughout our discussion, when dealing with elements defined in $\R^{n+1}$, we shall need to
distinguish the extended dimension. A vector $x\in \R^{n+1}$, will be denoted by
\[
  x =  (x^1,\ldots,x^n, x^{n+1}) = (x', x^{n+1}) = (x',y),
\]
with $x^i \in \R$ for $i=1,\ldots,{n+1}$, $x' \in \R^n$ and $y\in\R$.
The upper half-space in $\R^{n+1}$ will be denoted by
\[
  \R^{n+1}_+ = \left\{x=(x',y): x' \in \R^n, \  y \in \R, \ y > 0 \right\}.
\]

The relation $a \lesssim b$ indicates that $a \leq Cb$, with a constant
$C$ that does not depend on $a,b,$
and the important multilevel discretization parameters $J$ and $h_{J}$
(see Section~\ref{sec:MG_degenerate} for their definitions),
but it might depend on $s$ and $\Omega$. The value of $C$ might change
at each occurrence.

If $X$ and $Y$ are topological vector spaces, we write $X \hookrightarrow Y$
to denote that $X$ is continuously embedded in $Y$. We denote by $X'$ the dual of $X$.
If $X$ is normed, we denote by $\|\cdot\|_X$ its norm.

\subsection{Weighted Sobolev spaces}
\label{sub:sec:weighted_spaces}

In the Caffarelli-Silvestre extension \eqref{alpha_harm_intro}, the parameter $\alpha=1-2s\in (-1,1)$.
Consequently, the weight $y^\alpha$ is degenerate ($\alpha>0$) or singular ($\alpha<0$),
thereby making problem \eqref{alpha_harm_intro}
nonuniformly elliptic.
The natural space for problem \eqref{alpha_harm_intro} is no longer the standard space
$H^1$ but rather the weighted
Sobolev space $H^1(y^{\alpha},\C)$, where
the weight $|y|^\alpha$ belongs to the so-called
Muckenhoupt class $A_2(\R^{n+1})$; see \cite{FKS:82,Muckenhoupt,Turesson}.
For completeness, we recall the definition of Muckenhoupt classes.

\begin{definition}[Muckenhoupt class $A_p$]
\label{def:Muckenhoupt}
Let $N \geq 1$ and $\omega \in L^1_{loc}(\R^{N})$ be such that $\omega(x) > 0$ for a.e.~$x\in \R^N$. We say that $\omega \in A_p(\R^N)$,
$1 < p < \infty$, if there exists a positive constant $C_{p,\omega}$ such that
\begin{equation}
  \label{A_pclass}
  \sup_{B} \left( \frac{1}{|B|}\int_{B} \omega \right)
            \left(\frac{1}{|B|}\int_{B} \omega^{1/(1-p)} \right)^{p-1}  = C_{p,\omega} < \infty,
\end{equation}
where the supremum is taken over all balls $B$ in $\R^N$ and $|B|$ denotes the Lebesgue measure of
$B$. In addition, we define
\[
  A_\infty(\R^N) = \bigcup_{p>1} A_p(\R^N), \quad \textrm{and} \quad
  A_1(\R^N) = \bigcap_{p>1} A_p(\R^N).
\]
\end{definition}

If $\omega$ belongs to the Muckenhoupt class $A_p(\R^N)$, we say that $\omega$ is an $A_p$-weight, and
we call the constant $C_{p,\omega}$ in \eqref{A_pclass} the $A_p$-constant of $\omega$.

\begin{remark}[characterization of the $A_1$-class]
\label{rem:ApinAr}
A useful characterization of the $A_1$-Muckenhoupt class is
given in \cite{SteinHarmonic}: $\omega \in A_1(\R^N)$ if and only if
\begin{equation}
  \sup_{B} \frac{ \| \omega^{-1} \|_{L^\infty(B)} }{|B|} \int_B \omega = C_{1,\omega} < \infty.
\label{eq:charA1}
\end{equation}
\end{remark}

Since $\alpha \in (-1,1)$, it is immediate that $|y|^{\alpha} \in A_2(\R^{n+1})$ but
$|y|^{\alpha} \notin A_1(\R^{n+1})$.

From the $A_p$-condition and H{\"o}lder's inequality follows that an $A_p$-weight satisfies the so-called
\emph{strong doubling property}. The proof of this fact is standard; see
\cite[Proposition 1.2.7]{Turesson} for more details.
\begin{proposition}[strong doubling property]
\label{pro:double}
 Let $\omega \in A_p(\R^n)$ with $1< p < \infty$ and let $E \subset \R^N$ be a measurable
subset of a ball $B \subset \R^N$. Then
\begin{equation}
\label{strong_double}
 \omega(B) \leq C_{p,\omega} \left(\frac{|B|}{|E|} \right)^p \omega(E).
\end{equation}
\end{proposition}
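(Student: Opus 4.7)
The plan is to derive the strong doubling property directly from the $A_p$-condition by bounding $|E|$ from above using H\"older's inequality, with the weight split as $\omega^{1/p}\cdot\omega^{-1/p}$, and then invoking the $A_p$-condition on the ball $B$ to control the resulting dual integral in terms of $\omega(B)$.

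Concretely, I would first write
\[
|E| = \int_E \omega^{1/p}\,\omega^{-1/p}\,\diff x,
\]
and apply H\"older's inequality with conjugate exponents $p$ and $p' = p/(p-1)$ to obtain
\[
|E|^p \leq \omega(E)\left(\int_E \omega^{1/(1-p)}\,\diff x\right)^{p-1}.
\]
Since $E \subset B$ and $\omega^{1/(1-p)} \geq 0$, the right-hand integral is bounded above by the corresponding integral over $B$, giving
\[
|E|^p \leq \omega(E)\left(\int_B \omega^{1/(1-p)}\,\diff x\right)^{p-1}.
\]

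Next, I would invoke the $A_p$-condition \eqref{A_pclass}, which rearranges to
\[
\left(\int_B \omega^{1/(1-p)}\,\diff x\right)^{p-1} \leq C_{p,\omega}\,\frac{|B|^p}{\omega(B)}.
\]
Substituting this into the previous display yields $|E|^p\,\omega(B) \leq C_{p,\omega}\,|B|^p\,\omega(E)$, which is exactly \eqref{strong_double} after dividing by $|E|^p$.

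No step appears to be a serious obstacle: the only subtlety is the bookkeeping around the exponent $1/(1-p)$ (which is negative), but this is harmless since $\omega > 0$ a.e.\ guarantees that $\omega^{1/(1-p)}$ is well defined and locally integrable by the $A_p$-condition itself. The argument uses nothing beyond H\"older's inequality, monotonicity of the integral on $E\subset B$, and a direct rewriting of the $A_p$-constant bound.
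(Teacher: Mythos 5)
Your proof is correct and is exactly the standard argument the paper alludes to: the text preceding the proposition says the result ``follows from the $A_p$-condition and H\"older's inequality,'' defers the details to \cite[Proposition 1.2.7]{Turesson}, and your splitting $1=\omega^{1/p}\omega^{-1/p}$ with H\"older on $E$, monotonicity $E\subset B$, and the $A_p$-bound on $B$ is precisely that proof. Nothing further is needed.
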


For a weight in the Muckenhoupt class $A_p$ we define weighted $L^p$ spaces as follows.

\begin{definition}[weighted Lebesgue spaces]
\label{Lp_weighted}
Let $\omega \in A_p$, and let $D \subset \R^N$ be an open and bounded domain. For $1 < p < \infty$,
we define the weighted Lebesgue space
$L^p(\omega, D)$ as the set of measurable functions $u$ on $D$ for which
\[
 \| u \|_{L^p(\omega, D)}= \left( \int_{D} |u|^p \omega \,dx\right)^{1/p} < \infty.
\]
\end{definition}

Based on the fact that $L^p(\omega, D) \hookrightarrow L^1_{loc}(D)$
(cf. \cite[Proposition 2.3]{NOS2}), it makes sense to talk about weak
derivatives of functions in $L^p(\omega, D)$. We define weighted Sobolev spaces as follows.

\begin{definition}[weighted Sobolev spaces]
\label{H_1_weighted}
Let $D\subset \R^N$ be an open and bounded domain,
$\omega \in A_p$ with $ 1< p < \infty$ and $m \in \mathbb{N}$. The weighted Sobolev space
$W^m_p(\omega,D)$ is the space of functions
$u \in L^p(\omega,D)$ such that for any multi-index $\kappa$ with $|\kappa|\leq m$, the weak derivatives
$D^\kappa u \in L^p(\omega, D)$. We endow $W^m_p(\omega,D)$ with the following seminorm and norm
\begin{equation*}
 |u|_{W^m_p(\omega, D)} = \left( \sum_{|\kappa| = m }
 \|D^{\kappa}u\|_{L^p(\omega, D)}^p \right)^{1/p},
\quad
 \| u \|_{W^m_p(\omega, D)} = \left( \sum_{j \leq m }
 |u|_{W^j_p(\omega, D)}^p \right)^{1/p},
\end{equation*}
respectively. We also define $\Wmpo(\omega, D)$ as the closure of $\C_0^{\infty}(D)$
in $W^m_p(\omega, D)$.
\end{definition}

Owing to the fact that $\omega \in A_p$, most of the properties of classical Sobolev spaces
have a weighted counterpart; see \cite{FKS:82,GU,Turesson}. In particular
we have the following
result (c.f.~\cite[Proposition 2.1.2, Corollary 2.1.6]{Turesson} and \cite[Theorem~1]{GU}).

\begin{proposition}[properties of weighted Sobolev spaces]
\label{PR:banach}
Let $D \subset \R^N$ be an open and bounded domain, $ 1 < p < \infty$, $\omega \in A_p(\R^N)$ and $m \in \mathbb{N}$. The spaces
\[
 W^m_p(\omega, D) \qquad \textrm{and} \qquad \Wmpo(\omega, D)
\]
are complete and $W^m_p(\omega, D) \cap \C^{\infty}(D)$ is dense in $W^m_p(\omega, D)$.
\end{proposition}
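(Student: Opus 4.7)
The plan is to follow the classical Meyers--Serrin blueprint, with the weighted modifications forced by $\omega \in A_p$. The first observation is that $\omega \diff x$ is a positive and finite Borel measure on the bounded open set $D$, since $\omega \in L^1_{loc}(\R^N)$ and $\omega > 0$ almost everywhere. From this, the standard Riesz--Fischer argument applied directly to $\omega \diff x$ yields that $L^p(\omega, D)$ is a Banach space, which I would establish as a preliminary lemma before attacking $W^m_p(\omega, D)$.

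To prove completeness of $W^m_p(\omega, D)$, I would take a Cauchy sequence $\{u_n\}$, extract limits $u_\kappa \in L^p(\omega, D)$ of each of the sequences $\{D^\kappa u_n\}$ with $|\kappa|\le m$ from the previous step, and then identify $u_\kappa$ with the weak derivative $D^\kappa u_0$ of $u_0 := \lim u_n$. The key tool is the continuous embedding $L^p(\omega, D) \hookrightarrow L^1_{loc}(D)$ recalled from \cite[Proposition~2.3]{NOS2}: it transfers $L^p(\omega)$--convergence to $L^1_{loc}(D)$--convergence, which is strong enough to pass to the limit in the distributional identity defining the weak derivative when tested against $\mathcal{C}_0^\infty(D)$ functions. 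Completeness of $\Wmpo(\omega,D)$ then follows automatically because it is closed by definition.

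For density, my plan is to use the Meyers--Serrin localization--mollification procedure. I would fix $u \in W^m_p(\omega, D)$, pick an exhaustion of $D$ by open sets $D_j \Subset D_{j+1}$ together with a subordinate smooth partition of unity $\{\phi_j\}$, and then mollify each $\phi_j u$ with radius $\varepsilon_j$ small enough that the result is supported in $D$ and the local error is below $\varepsilon\cdot 2^{-j}$ in every derivative up to order $m$ (using Leibniz together with $D^\kappa(\rho_{\varepsilon_j} * \phi_j u) = \rho_{\varepsilon_j} * D^\kappa(\phi_j u)$). Summing the corrections gives $u_\varepsilon \in W^m_p(\omega, D) \cap \mathcal{C}^\infty(D)$ with $\|u_\varepsilon - u\|_{W^m_p(\omega, D)} \le \varepsilon$.

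The hard step, and the reason the $A_p$ hypothesis is unavoidable, is showing that the mollification actually converges in $L^p(\omega, D)$. Here I would invoke Muckenhoupt's theorem on the boundedness of the Hardy--Littlewood maximal operator $M$ on $L^p(\omega, \R^N)$: it provides the uniform domination $|(\rho_\varepsilon * f)(x)| \lesssim Mf(x)$ with $Mf \in L^p(\omega)$, so that the classical pointwise convergence $\rho_\varepsilon * f \to f$ at Lebesgue points upgrades to $L^p(\omega)$--convergence by dominated convergence. Without $\omega \in A_p$, the maximal function need not be bounded on $L^p(\omega)$ and the entire density assertion would collapse; this is the one place where the full structural $A_p$ hypothesis, and not merely integrability of $\omega$, is essential.
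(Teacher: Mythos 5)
The paper itself does not prove this proposition; it simply cites \cite[Proposition~2.1.2, Corollary~2.1.6]{Turesson} and \cite[Theorem~1]{GU}. Your argument is correct and is essentially the Turesson route: completeness via Riesz--Fischer for $\omega\diff x$ plus the embedding $L^p(\omega,D)\hookrightarrow L^1_{loc}(D)$ to pass distributional derivatives to the limit, then Meyers--Serrin localization--mollification for density, with the convergence $\rho_\varepsilon * f \to f$ in $L^p(\omega)$ secured by the Muckenhoupt maximal-function bound and dominated convergence. That all holds up; the commutation of mollification with derivatives and the Leibniz step for $\phi_j u$ are all legitimate once $\varepsilon_j$ is taken smaller than the distance from $\supp(\phi_j u)$ to $\partial D$.

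One remark in your last paragraph overreaches, though. You assert that without $\omega \in A_p$ ``the entire density assertion would collapse'' and that $A_p$ is ``essential'' here. That is not so: the second reference the paper cites, \cite[Theorem~1]{GU}, establishes the $H = W$ density result assuming only $\omega,\,\omega^{1/(1-p)} \in L^1_{loc}$, a substantially weaker condition than $A_p$, by an argument that bypasses the maximal operator entirely. The $A_p$ hypothesis is what makes your particular tool (the domination $|\rho_\varepsilon * f| \lesssim Mf$ plus Muckenhoupt's theorem) available, but it is sufficient, not necessary, for density. So the hypothesis is convenient, not unavoidable; within the paper's framework $\omega \in A_p$ is assumed globally and your proof is valid, but the necessity claim should be dropped.
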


\subsection{The Caffarelli-Silvestre extension problem}
\label{sub:sec:CSextension}
Here we explore problem \eqref{alpha_harm_intro}; we refer the reader to
\cite{CS:07,CS:11,NOS} for details.
Since problem \eqref{alpha_harm_intro} is posed on the unbounded domain $\C$, it cannot be
directly approximated with finite element-like
techniques.
However, as \cite[Proposition 3.1]{NOS} shows, the solution $\ve$ decays exponentially in $y$ so that,
by truncating the cylinder $\C$ to $\C_{\Y}$
and setting a vanishing Dirichlet boundary condition on the upper boundary $y = \Y$,
we only incur in an exponentially small error in terms of $\Y$ \cite[Theorem 3.5]{NOS}.

Define
\[
  \HL(y^{\alpha},\C_{\Y}) = \left\{ v \in H^1(y^\alpha,\C_\Y): v = 0 \text{ on }
    \partial_L \C_\Y \cup \Omega \times \{ \Y\} \right\},
\]
where $\alpha = 1 -2s$. Since $y^{\alpha} \in A_2(\R^n)$, Proposition~\ref{PR:banach} shows that
$\HL(y^{\alpha},\C_\Y)$ is a Hilbert space. We also define
\begin{equation}
  \label{H}
  \Hs =
  \begin{dcases}
    H^s(\Omega),  & s \in (0,\sr), \\
    H_{00}^{1/2}(\Omega), & s = \sr, \\
    H_0^s(\Omega), & s \in (\sr,1),
  \end{dcases}
\end{equation}
which is the natural space for the solution $u$ of problem \eqref{fl=f_bdddom};
let $\Hsd$ be the dual of $\Hs$.
As \cite[Proposition 2.5]{NOS} shows, the trace operator
\[
 \HL(y^{\alpha},\C_\Y) \ni w \mapsto \tr w \in \Hs,
\]
is well defined. Problem \eqref{alpha_harm_intro} then reads: find $v \in \HL(y^{\alpha},\C_\Y)$ such that
\begin{equation}
\label{alpha_harmonic_extension_weak_T}
  \int_{\C_\Y} y^\alpha \nabla v \cdot \nabla \phi = d_s \langle f, \tr \phi \rangle_{\Hsd \times \Hs},
    \quad \forall \phi \in \HL(y^{\alpha},\C_\Y),
\end{equation}
where, $\langle \cdot, \cdot \rangle_{\Hsd \times \Hs}$ denotes the duality pairing between $\Hs$ and
$\Hsd$. Finally, we recall the exponential convergence result \cite[Theorem 3.5]{NOS}:
\begin{equation*}
  \| \nabla(\ve - v) \|_{L^2(\C, y^{\alpha})} \lesssim e^{-\sqrt{\lambda_1} \Y/4} \| f\|_{\Hsd},
\end{equation*}
where $\lambda_1$ denotes the first eigenvalue of the Dirichlet Laplace operator and
$\Y$ is the truncation parameter.


\section{Finite element discretization of nonuniformly elliptic equations}
\label{sec:FEM_degenerate}
Let $D$ be an
open and bounded subset of $\R^N$ ($N\ge1$)
with boundary $\partial D$ and let $f \in L^2(\omega^{-1},\D)$.
In this section, we focus on the study of a finite element method for the following
nonuniformly elliptic boundary value problem: find $u \in H_0^1(\omega,\D)$ that solves
\begin{equation}
\label{weighted_second}
  \begin{dcases}
    -\DIV (\A(x) \nabla u) = f, & \text{in } D, \\
    u = 0, & \text{on } \partial D,
  \end{dcases}
\end{equation}
where
$\A : D \rightarrow \R^{N\times N}$ is symmetric and satisfies the following nonuniform ellipticity condition
\begin{equation}
\label{weight_singular}
\omega(x) |\xi|^2 \lesssim \xi^\intercal \A(x) \xi \lesssim \omega(x) |\xi|^2,
\qquad \forall \xi \in \R^N, \quad \text{a.e.}\ x \in D.
\end{equation}
The function $\omega$ belongs to the Muckenhoupt class $A_2$, which is defined by \eqref{def:Muckenhoupt}.
Examples of nonuniformly elliptic equations are the harmonic extension problem related with the
fractional Laplace operator~\cite{CS:11,CS:07}, elliptic PDEs in an axisymmetric three dimensional domain
with axisymmetric data~\cite{BBD:06,GP:06}, and equations modeling the motion of particles in a central potential
field in quantum mechanics~\cite{ABH:06}.

Nonuniformly elliptic elliptic equations of the type \eqref{weighted_second}-\eqref{weight_singular}
have been studied in
\cite{FKS:82}. Given $f \in L^2(\omega^{-1},D)$, there exists a unique
solution $u \in H_0^1(\omega,D)$~\cite[Theorem 2.2]{FKS:82}.
Notice that by taking the weight $\omega$ to be $y^{\alpha}$ we see that the
underlying differential operator in \eqref{alpha_harmonic_extension_weak_T} is a
particular instance of $-\DIV (\A(x) \nabla u)$ in \eqref{weighted_second}.

We define the bilinear form
\begin{equation}
  a(u,v) = \int_D  \A \nabla u\cdot \nabla v \diff x,
\label{eq:defofaform}
\end{equation}
which is clearly continuous and coercive in
$H_0^1(\omega,D)$. Then, a weak formulation of problem
\eqref{weighted_second} reads: find $u\in H^1_0(\omega,D)$ such that
\begin{equation}
\label{degenerate_weak}
  a(u,v) = \int_D f v \diff x, \quad \forall v \in H^1_0(\omega,D).
\end{equation}

\subsection{Finite element approximation on quasi-uniform meshes}
\label{sub:FE}
Let us describe the construction of the underlying finite element spaces.
To avoid technical difficulties, we assume $D$ to be a polyhedral domain. Let $\T = \{ T \} $
be a mesh of $D$
into elements $T$ (simplices or cubes) such that
\[
  \bar D = \bigcup_{T \in \T} T, \qquad
  |D| = \sum_{T \in \T} |T|.
\]
The partition $\T$ is assumed to be conforming or compatible, \ie the
intersection of any two cells $T$ and $T'$ in $\T$ is either empty or a common lower
dimensional element. We denote by $\Tr$ the
collection of all conforming meshes. We say that
$\Tr$ is \textit{shape regular} if there exists a constant $\sigma > 1$ such that,
for all $\T \in \Tr,$
\begin{equation}
\label{shape_reg}
 \max \left\{ \sigma_T : T \in \T \right\} \leq \sigma,
\end{equation}
where $\sigma_T:=h_T/\rho_T$ is the shape coefficient of $T$. For simplicial elements,
$h_T = \textrm{diam}(T)$ and $\rho_T$ is the diameter of the largest sphere inscribed in $T$
\cite{BrennerScott,Ciarletbook}.
For the definition of $h_T$ and $\rho_T$ in the case of $n$-rectangles, we refer to~\cite{Ciarletbook}.

We assume that the collection of meshes $\Tr$ is conforming and satisfies the regularity assumption \eqref{shape_reg},
which says that the element shape
does not degenerate with refinement. A refinement method
generating meshes satisfying the shape regular
condition \eqref{shape_reg} will be
called \emph{isotropic refinement}. A particular instance of an isotropic refinement
is the so called quasi-uniform refinement. We recall that $\Tr$ is quasi-uniform if it is shape regular and
for all $\T \in \Tr$ we have
\[
  \max \left\{ h_T: T \in \T  \right\} \lesssim \min \left\{ h_T: T \in \T \right\},
\]
where the hidden constant is independent of $\T$. In this case, all the elements on the same refinement level are of comparable size. We define $h_{\T}= \max_{T \in \T}h_T$.

Given a mesh $\T \in \Tr$, we define
the finite element space of continuous piecewise polynomials of degree one
\begin{equation}
\label{fe_space}
  \V(\T) = \left\{
            W \in \C^0( \bar{D}): \ W|_T \in \mathcal{P}(T) \ \forall T \in \T, \
            W|_{\partial \Omega} = 0
          \right\},
\end{equation}
where for a simplicial element $T$, $\mathcal{P}(T)$ corresponds to the space
of polynomials of total degree at most one, \ie $\mathbb{P}_1(T)$,
and for $n$-rectangles, $\mathcal{P}(T)$ stands for the space of polynomials
of degree at most one in each variable, \ie $\mathbb{Q}_1(T)$.

The finite element approximation of $u$, solution of problem \eqref{weighted_second},
is defined as the unique discrete function $U_{\T} \in \V(\T)$ satisfying
\begin{equation}\label{weighted_discrete_2}
  a(U_\T,W) = \int_D f W, \quad \forall W \in \V(\T).
\end{equation}

\subsection{Quasi-interpolation operator}
\label{sub:interpolation_operator}
Let us recall the main properties of the quasi-interpolation operator $\Pi_{\T}$ introduced and analyzed
in~\cite{NOS2}. This operator is based on local averages over stars, and
then it is well defined for functions in $L^p(\omega,D)$.
We summarize its construction and its
approximation properties as follows; see~\cite{NOS2} for details.

Given a mesh $\T \in \Tr$ and $T\in \T$, we denote by $\N(T)$
the set of nodes of $T$. We set $\N(\T):= \cup_{T \in \T} \N(T)$ and
$\Nin(\T):= \N(\T) \cap D.$ Then, any discrete function $W \in \V(\T)$ is characterized
by its nodal values on the set $\Nin(\T)$. Moreover, the functions $\phi_{\vero} \in \V(\T)$, $\vero \in \Nin(\T)$,
such that $\phi_{\vero}(\wero) = \delta_{\vero \wero}$ for all $\wero \in \N(\T)$ are the canonical basis of $\V(\T)$, and
\[
 W = \sum_{\vero \in \Ninn(\T)} W(\vero) \phi_{\vero}.
\]

Given a vertex $\vero \in \N(\T)$, we define the star or patch around
$\vero$ as
$
  S_{\vero} = \cup_{T \ni \vero} T,
$
and for $T \in \T$ we define its patch as
$
  S_T = \cup_{\vero \in T} S_\vero.
$
For each vertex $\vero \in \N(\T)$, we define $h_{\vero}=\min\{h_{T}: \vero \in T \}$.

Let $\psi \in \C^{\infty}(\R^N)$ be such that $\int \psi = 1$ and $\textrm{supp } \psi \subset B$,
where $B$ denotes the ball in $\R^N$ of radius $r$ centered at zero with
$r \leq 1/\sigma$, with $\sigma$ defined by \eqref{shape_reg}.
For $\vero \in \Nin(\T)$, we define the rescaled smooth function
\[
  \psi_{\vero}(x) =  \frac{1}{ h_{\vero}^N } \psi \left(\frac{\vero-x}{h_{\vero}}\right).
\]

Given a smooth function $v$, we denote by $P^1v(x,z) $ the Taylor polynomial of degree one of the function $v$
in the variable $z$ about the point $x$, \ie
\[
  P^1v(x,z) = v(x) + \nabla v(x) \cdot (z - x).
\]
Then, given $\vero \in \Nin(\T)$ and a function  $v \in W^1_p(\omega,D)$,
we define the corresponding averaged Taylor polynomial of first degree of $v$ about the vertex $\vero$ as
\begin{equation}
\label{p1average}
  Q^1_{\vero}v(z) = \int P^1v(x,z) \psi_{\vero}(x) \diff x.
\end{equation}
Since $\textrm{supp } \psi_{\vero} \subset S_{\vero}$,
the integral appearing in \eqref{p1average} can be written over $S_{\vero}$. Moreover, integration by parts
shows that $Q^1_{\vero}$ is well defined for functions in $L^1(D)$; see~\cite[Proposition 4.1.12]{BS}.
Consequently,~\cite[Proposition 2.3]{NOS} implies that $Q^1_{\vero}$ is also well defined for functions
in $L^p(\omega,D)$ with $\omega \in A_p(\R^N)$.

Given $\omega \in A_p(\R^N)$ and $v \in L^p(\omega,D)$, we define the quasi-interpolant $\Pi_{\T}v$
as the unique function  $\Pi_\T v \in \V(\T)$ that satisfies $\Pi_{\T}v(\vero) = Q^1_{\vero}(\vero)$
if $\vero \in \N(\T)$, and $\Pi_{\T}v(\vero) = 0$ if $\vero \in \N(\T) \cap \partial \Omega$, \ie
\[
 \Pi_{\T} v(\vero) = \sum_{\vero \in \Ninn(\T)} Q^1_{\vero}(\vero) \phi_{\vero}.
\]

For this operator,~\cite[Section 5]{NOS2} proves stability and interpolation error estimates in the weighted $L^p$-norm
and $W^1_p$-seminorm. We recall these results for completeness.

\begin{proposition}[weighted stability and local error estimate I]
\label{TH:v - PivL2}
Let $T \in \T$, $\omega \in A_p(\R^N)$ and $v \in L^p(\omega,S_T)$. Then, we have the following local stability bound
\begin{equation}
\label{PiLpbounded}
  \|  \Pi_{\T} v \|_{L^p(\omega,T)} \lesssim \| v \|_{L^p(\omega,S_{T})}.
\end{equation}
If, in addition, $v \in W^1_p(\omega,S_{T})$, then we have the local interpolation error estimate
\begin{equation}
\label{v - PivLp}
  \| v - \Pi_{\T} v \|_{L^p(\omega,T)} \lesssim h_{\verot}
      \| \nabla v\|_{L^p(\omega,S_{T})}.
\end{equation}
The hidden constants in both inequalities depend only on $C_{p,\omega}$, $\psi$ and $\sigma$.
\end{proposition}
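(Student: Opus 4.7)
The plan is to follow the classical Dupont--Scott/Cl\'ement strategy for quasi-interpolants based on averaged Taylor polynomials, but to execute every estimate with respect to the weighted measure $\omega\diff x$. The two ingredients that make the weighted version go through are the $A_p$-condition \eqref{A_pclass}, which lets us convert unweighted $L^1$-integrals into weighted $L^p$-norms, and the strong doubling property (Proposition~\ref{pro:double}), which lets us freely swap nearby integration domains while keeping the constant under control.

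First I would derive a pointwise bound on $|Q^1_{\vero}v(\vero)|$ in terms of a local weighted $L^p$-norm of $v$. Starting from $Q^1_{\vero}v(\vero)=\int[v(x)+\nabla v(x)\cdot(\vero-x)]\psi_{\vero}(x)\diff x$, integration by parts on the gradient term recasts the formula as $\int v(x)K_{\vero}(x)\diff x$, where $K_{\vero}=(N+1)\psi_{\vero}-(\vero-x)\cdot\nabla\psi_{\vero}$ is supported in the ball $B_{\vero}=B(\vero,rh_{\vero})\subset S_{\vero}$ and obeys $\|K_{\vero}\|_{L^\infty}\lesssim h_{\vero}^{-N}$. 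Applying H\"older's inequality with the split $\omega^{1/p}\omega^{-1/p}$ and invoking \eqref{A_pclass} on $B_{\vero}$ gives
\begin{equation*}
|Q^1_{\vero}v(\vero)|\lesssim \omega(B_{\vero})^{-1/p}\|v\|_{L^p(\omega,B_{\vero})},
\end{equation*}
with constant depending only on $C_{p,\omega}$ and $\psi$.

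Next I would assemble the stability bound \eqref{PiLpbounded}. Writing $\Pi_{\T}v|_T=\sum_{\vero\in\N(T)}Q^1_{\vero}v(\vero)\phi_{\vero}$ and using $|\phi_{\vero}|\le 1$ together with the uniform bound on $\#\N(T)$ coming from \eqref{shape_reg}, one arrives at
\begin{equation*}
\|\Pi_{\T}v\|_{L^p(\omega,T)}^p\lesssim \sum_{\vero\in\N(T)}\frac{\omega(T)}{\omega(B_{\vero})}\,\|v\|_{L^p(\omega,B_{\vero})}^p.
\end{equation*}
Since $B_{\vero}\subset S_T$ and both $T$ and $B_{\vero}$ are contained in a common ball of radius comparable to $h_{\vero}$ by shape regularity, Proposition~\ref{pro:double} bounds $\omega(T)/\omega(B_{\vero})$ by a constant depending only on $C_{p,\omega}$ and $\sigma$, and \eqref{PiLpbounded} follows. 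For the error estimate \eqref{v - PivLp} I would exploit polynomial exactness $\Pi_{\T}P=P$ for every $P\in\mathbb{P}_1$, apply the triangle inequality and the stability just shown to get $\|v-\Pi_{\T}v\|_{L^p(\omega,T)}\lesssim \|v-P\|_{L^p(\omega,S_T)}$, and then choose $P$ to be the averaged Taylor polynomial of $v$ over the star $S_T$, invoking a weighted Bramble--Hilbert estimate $\|v-P\|_{L^p(\omega,S_T)}\lesssim h_T\|\nabla v\|_{L^p(\omega,S_T)}$ on the star-shaped patch.

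The main obstacle is that weighted Bramble--Hilbert estimate. Its standard proof represents $v(x)-P(x)$ as an integral of $\nabla v$ against a Riesz-type kernel of order $1-N$ and then invokes the boundedness on $L^p(\omega)$ of a fractional maximal operator, which is available for $A_p$-weights. The delicate part is scale invariance: one must verify that the implicit constant depends only on $C_{p,\omega}$, $\sigma$ and $\psi$, and not on $\diam(S_T)$ or the geometry of the patch. This is what ultimately forces the use of doubling through Proposition~\ref{pro:double} at every step, and it is the technical heart of the argument.
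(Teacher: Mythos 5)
The paper does not actually prove this proposition; it quotes it from~\cite[Section~5]{NOS2} and explicitly says so just before the statement. So there is no in-paper proof to compare your argument against, and your proposal has to be judged on its own merits and against what the cited reference is likely to do.

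Your outline is essentially the standard Dupont--Scott/Cl\'ement strategy adapted to $A_p$ weights, and this is indeed the route taken in~\cite{NOS2}. The first two steps are correct and tight: the pointwise bound $|Q^1_{\vero}v(\vero)| \lesssim \omega(B_\vero)^{-1/p}\|v\|_{L^p(\omega,B_\vero)}$ via the kernel $K_\vero$, H\"older, the $A_p$-condition \eqref{A_pclass}, and the strong doubling estimate of Proposition~\ref{pro:double} to compare $\omega(T)$ with $\omega(B_\vero)$, give \eqref{PiLpbounded} with the claimed dependence on $C_{p,\omega}$, $\psi$, $\sigma$. (One small remark: the integration by parts that produces $K_\vero$ is really the \emph{definition} that extends $Q^1_\vero$ to $v\in L^p(\omega)\hookrightarrow L^1_{\mathrm{loc}}$; you should present it that way, since \eqref{PiLpbounded} is asserted for $v$ that need not have a weak gradient.)

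Two issues merit attention in the error estimate. First, the operator $\Pi_\T$ of this paper vanishes at boundary nodes by construction, so $\Pi_\T P = P$ fails for $P\in\mathbb{P}_1$ on any $T$ touching $\partial\Omega$; your triangle-inequality step therefore only works verbatim for interior elements. For boundary stars one either needs $v$ to vanish on $\partial\Omega$ and a weighted Hardy/Poincar\'e inequality, or a separate argument --- this is exactly how \cite{NOS2} separates interior from boundary patches, and your proposal would need to acknowledge and handle that split. Second, and more substantively, the weighted Bramble--Hilbert estimate $\|v-P\|_{L^p(\omega,S_T)}\lesssim h_T\|\nabla v\|_{L^p(\omega,S_T)}$ with constant depending only on $C_{p,\omega}$ and $\sigma$ is the actual content of the proposition; you correctly flag it as the technical heart but leave it as a black box. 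Your suggested route via the Riesz representation of the remainder and the $L^p(\omega)$-boundedness of a fractional maximal operator is viable; \cite{NOS2} instead proves a weighted Poincar\'e inequality on star-shaped domains more directly, checking scale invariance by hand. Either works, but without carrying one of them through the proof is incomplete precisely at the point that distinguishes the weighted case from the classical one.
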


\begin{proposition}[weighted stability and local error estimate II]
\label{TH:v - PivH1}
Let $T \in \T$, $\omega \in A_p(\R^N)$ and $v \in W^1_p(\omega,S_T)$. Then, we have the following local stability bound
\begin{equation}
\label{dPIcontinuous}
  \|\nabla \Pi_{\T} v \|_{L^p(\omega,T)} \lesssim \| \nabla v\|_{L^p(\omega,S_T)}.
\end{equation}
If, in addition, $v \in W^2_p(\omega,S_T)$, then
\begin{equation}
\label{v - PivW1p}
  \|\nabla( v - \Pi_{\T} v) \|_{L^p(\omega,T)} \lesssim h_{\verot}  \| D^2 v \|_{L^p(\omega,S_T)}.
\end{equation}
The hidden constants in both inequalities depend only on $C_{p,\omega}$, $\psi$ and $\sigma$.
\end{proposition}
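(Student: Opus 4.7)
The plan is to derive both bounds via polynomial invariance of $\Pi_\T$ combined with the weighted $L^p$-stability already established in Proposition~\ref{TH:v - PivL2}, a weighted inverse inequality on discrete functions, and a weighted Poincar\'e/Bramble--Hilbert inequality on the patch $S_T$. The key observation, which I would record first, is that since $Q^1_\vero$ is the averaged Taylor polynomial of degree one, it reproduces affine functions; consequently $\Pi_\T L = L$ on $T$ for every linear polynomial $L$ defined on $S_T$. This single fact drives both estimates.

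For the stability bound \eqref{dPIcontinuous}, since $\Pi_\T c = c$ for any constant $c$, we have $\nabla \Pi_\T v = \nabla \Pi_\T(v-c)$ on $T$. I would then apply a weighted inverse inequality on $T$ to the discrete function $\Pi_\T(v-c) \in \V(\T)$, obtaining $\|\nabla \Pi_\T(v-c)\|_{L^p(\omega,T)} \lesssim h_\verot^{-1}\|\Pi_\T(v-c)\|_{L^p(\omega,T)}$, followed by the $L^p$-stability bound \eqref{PiLpbounded} to reach $\|v-c\|_{L^p(\omega,S_T)}$. Choosing $c$ to be the weighted mean of $v$ over $S_T$ and invoking the weighted Poincar\'e inequality on $S_T$ (available because $\omega \in A_p$) gives $\|v-c\|_{L^p(\omega,S_T)} \lesssim h_\verot \|\nabla v\|_{L^p(\omega,S_T)}$. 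The factors of $h_\verot$ cancel, yielding \eqref{dPIcontinuous}.

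For the interpolation error \eqref{v - PivW1p}, I would pick a linear polynomial $L$ on $S_T$ (for instance an averaged Taylor polynomial of $v$ of degree one about a fixed vertex of $T$, chosen so that $\nabla L$ approximates $\nabla v$ in the Bramble--Hilbert sense) and use invariance to write $v - \Pi_\T v = (v-L) - \Pi_\T(v-L)$ on $T$. Taking the weighted $L^p(\omega,T)$ norm of the gradient, the first term is estimated directly; the second is controlled by $\|\nabla(v-L)\|_{L^p(\omega,S_T)}$ via the stability bound \eqref{dPIcontinuous} just proved. A weighted Bramble--Hilbert inequality on the star $S_T$ then upgrades this to $h_\verot \|D^2 v\|_{L^p(\omega,S_T)}$, which is the desired bound.

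The main obstacle, and the only step that is not automatic in the weighted setting, is justifying the two auxiliary tools on which both arguments rest: a local weighted inverse inequality on $T$ for piecewise linear/bilinear functions, and weighted Poincar\'e and Bramble--Hilbert inequalities on the star $S_T$. Each holds under the $A_p$ condition with constants depending only on $C_{p,\omega}$ and the shape-regularity constant $\sigma$, which matches the claimed dependence of the hidden constants; the passage between $h_\verot$ and $h_T$ for the various $T \ni \vero$ is harmless thanks to \eqref{shape_reg}.
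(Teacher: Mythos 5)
The paper itself does not prove this proposition; it simply recalls it from \cite{NOS2}, so there is no in-paper argument to compare against line by line. That said, your route --- polynomial invariance of $\Pi_\T$, a local weighted inverse inequality, weighted Poincar\'e and Bramble--Hilbert estimates on the patch $S_T$, and the weighted $L^p$-stability of Proposition~\ref{TH:v - PivL2} --- is the standard chain of reasoning for Cl\'ement/Scott--Zhang type operators and is what the cited reference does, so the approach is the right one.

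There is, however, a genuine gap in the way you state (and use) the key invariance fact. You claim $\Pi_\T L = L$ on $T$ for \emph{every} affine $L$, but this is false for elements that touch $\partial D$: by construction $\Pi_\T v \in \V(\T)$ is forced to vanish at all boundary nodes, whereas $L$ need not. In fact the stability bound \eqref{dPIcontinuous} itself cannot hold on boundary elements for general $v$: take $v \equiv 1$ near $S_T$ with $T \cap \partial D \neq \emptyset$; then $\nabla v = 0$, yet $\Pi_\T v$ equals $1$ at interior vertices and $0$ at boundary vertices of $T$, so $\nabla \Pi_\T v \not\equiv 0$. Your "subtract a constant and apply Poincar\'e" step therefore works only on interior elements. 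For boundary elements one must instead exploit that $v$ vanishes on $\partial D$ near $S_T$ --- which is what actually holds in every subsequent application of the estimate, since there $v \in H^1_0(\omega,D)$ or $v\in\HL(y^\alpha,\C_\Y)$ --- and apply a weighted Poincar\'e inequality with vanishing trace, so that no constant need be subtracted. Once you make that case distinction explicit, the remaining auxiliary ingredients you flag (local weighted inverse inequality, weighted Poincar\'e and Bramble--Hilbert on stars with constants depending only on $C_{p,\omega}$ and $\sigma$, and the interchangeability of $h_\vero$ with $h_T$ under shape regularity) are correctly identified and correctly deployed, and the derivation of \eqref{v - PivW1p} from \eqref{dPIcontinuous} via $v - \Pi_\T v = (v-L) - \Pi_\T(v-L)$ is sound.
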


\subsection{Finite element approximation on anisotropic meshes}
\label{sub:anisoFE}
Let us now focus our attention on the finite element discretization of problem \eqref{alpha_harm_intro}.
To do so, we must first study the regularity of its solution $\ve$.
An error estimate for $v$, solution of
  \eqref{alpha_harmonic_extension_weak_T}, depends on the regularity
  of $\ve$ as well~\cite[\S 4.1]{NOS}.
The second order regularity of $\ve$
is much worse in the extended direction as the following estimates from~\cite[Theorem 2.7]{NOS} reveal
\begin{align}
    \label{reginx}
  \| \Delta_{x'} \ve\|_{L^2(y^{\alpha},\C)} +
  \| \partial_y \nabla_{x'} \ve \|_{L^2(y^{\alpha},\C)}
  & \lesssim \| f \|_{\Ws}, \\
\label{reginy}
  \| \ve_{yy} \|_{L^2(y^{\beta},\C)} &\lesssim \| f \|_{L^2(\Omega)},
\end{align}
where $\beta > 2\alpha + 1$.
This suggests that \emph{graded} meshes in the extended variable $y$ play a fundamental role.

Estimates \eqref{reginx}-\eqref{reginy} motivate the construction of a mesh over $\C_{\Y}$
with cells of the form $T = K \times I$, where $K \subset \R^n$ is an element
that is isoparametrically equivalent either to the unit cube $[0,1]^n$ or the unit simplex in $\R^n$ and
$I \subset \R$ is an interval. To be precise,
let $\T_\Omega = \{T\}$ be a conforming and shape regular mesh of $\Omega$.
In order to obtain a global regularity assumption for $\T_{\Y}$, we assume that
there is a constant $\sigma_{\Y}$ such that
if $T_1=K_1\times I_1$ and $T_2=K_2\times I_2 \in \T_\Y$ have nonempty intersection, then
\begin{equation}
\label{shape_reg_weak}
     \frac{h_{I_1}}{h_{I_2}} \leq \sigma_{\Y},
\end{equation}
where $h_I = |I|$. Exploiting the Cartesian structure of the mesh it is possible to handle
anisotropy in the extended variable and obtain estimates of the form
\begin{align*}
  \| v - \Pi_{\T_\Y} v \|_{L^2(y^\alpha,T)} & \lesssim h_{\vero'}  \| \nabla_{x'} v\|_{L^2(y^\alpha,S_T)} +
  h_{\vero''}\| \partial_y v\|_{L^2(y^\alpha,S_T)}, \\
  \| \partial_{x_j}(v - \Pi_{\T_\Y} v) \|_{L^2(y^\alpha,T)} &\lesssim
 h_{\vero'}  \| \nabla_{x'} \partial_{x_j} v\|_{L^2(y^\alpha,S_T)} +
  h_{\vero''}\| \partial_y \partial_{x_j} v\|_{L^2(y^\alpha,S_T)},
\end{align*}
with $j=1,\ldots,n+1$, where $h_{\vero'}= \min\{h_K: \vero' \in  K\}$,
$h_{\vero''} = \min\{h_I: \vero'' \in I\}$ and $v$ is the solution
of problem \eqref{alpha_harmonic_extension_weak_T}; see~\cite[\S 4.2.3 and 4.2.4]{NOS} for details.
However, since $\ve_{yy} \approx y^{-\alpha -1 }$ as $y \approx 0$,
we realize that $\ve \notin H^2(y^{\alpha},\C)$
and the second estimate is not meaningful for $j=n+1$.
In view of the regularity estimate \eqref{reginy}
it is necessary to measure the regularity of $\ve_{yy}$ with a
different weight and thus compensate with a graded mesh in the extended
dimension. This makes anisotropic estimates essential.

In order to simplify the analysis and
implementation of multilevel techniques, we consider a sequence
of nested discretizations. We construct such meshes as follows.
First, we introduce a sequence of nested uniform partitions of the
unit interval $\{ \calT_k \}$,  with mesh points
${\widehat y}_{l,k}$, for $l=0,\ldots,M_k$ and $k=0,\dots,J$. Then, we obtain a family of meshes of the interval
$[0,\Y]$ given by the mesh points
\begin{equation}
\label{graded_mesh}
  y_{l,k} = \Y {\widehat y}_{l,k}^\gamma, \quad l=0,\dots,M_k,
\end{equation}
where $\gamma > 3/(1-\alpha)$. Then, for $k = 0,\dots,J$, we consider
a quasi-uniform triangulation $\T_{\Omega,k}$ of the domain $\Omega$
and construct the mesh $\T_{\Y,k}$ as the tensor product of $\T_{\Omega,k}$
and the partition given in \eqref{graded_mesh};
hence $\#\T_{\Y,k} = M_k \, \# \T_{\Omega,k}$. Assuming that $\# \T_{\Omega,k} \approx M^n_k$
we have $\#\T_{\Y,k} \approx M_k^{n+1}$. Finally, since $\T_{\Omega,k}$ is shape regular and quasi-uniform,
$h_{\T_{\Omega,k}} \approx (\# \T_{\Omega,k})^{-1/n}$. All these considerations allow us to obtain the
following result \cite[Theorem 5.4 and Remark 5.5]{NOS}.

\begin{theorem}[error estimate]
\label{TH:fl_error_estimates}
Denote
by $V_{\T_{\Y,k}} \in \V(\T_{\Y,k})$ the Galerkin approximation
of problem \eqref{alpha_harmonic_extension_weak_T} with first degree
tensor product elements. Then,
\begin{equation*}
  \| \nabla(\ve - V_{\T_{\Y,k}}) \|_{L^2(y^\alpha,\C)} \lesssim
|\log(\# \T_{\Y,k})|^s(\# \T_{\Y,k})^{-1/(n+1)} \|f \|_{\mathbb{H}^{1-s}(\Omega)},
\end{equation*}
where $\Y \approx \log(\# \T_{\Y,k})$.
\end{theorem}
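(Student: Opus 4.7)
My proof plan rests on three ingredients: truncating the semi-infinite cylinder to a bounded one with exponentially small error, invoking quasi-best approximation to reduce the discretization error to an interpolation estimate, and controlling that interpolation error by exploiting the Cartesian graded mesh and the anisotropic estimates from Section~\ref{sub:anisoFE}.

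First, I would write $\ve - V_{\T_{\Y,k}} = (\ve - v) + (v - V_{\T_{\Y,k}})$, where $v \in \HL(y^\alpha,\C_\Y)$ solves \eqref{alpha_harmonic_extension_weak_T} on the truncated cylinder (extended by zero to $\C$). The exponential decay estimate recalled at the end of Section~\ref{sub:sec:CSextension} controls $\|\nabla(\ve - v)\|_{L^2(y^\alpha,\C)}$ by $e^{-\sqrt{\lambda_1}\Y/4}\|f\|_{\Hsd}$. Choosing $\Y \approx \log\#\T_{\Y,k}$ with a sufficiently large implicit constant makes this contribution higher order with respect to the target rate $(\#\T_{\Y,k})^{-1/(n+1)}$. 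For the second term, coercivity and continuity of $a(\cdot,\cdot)$ on $\HL(y^\alpha,\C_\Y)$ yield C\'ea's lemma, reducing matters to the interpolation error
\begin{equation*}
\|\nabla(v - V_{\T_{\Y,k}})\|_{L^2(y^\alpha,\C_\Y)} \lesssim \|\nabla(v - \Pi_{\T_{\Y,k}} v)\|_{L^2(y^\alpha,\C_\Y)}.
\end{equation*}

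The heart of the matter is the interpolation estimate on the graded anisotropic mesh. Exploiting the Cartesian structure, I would write each cell as $T = K \times I \in \T_{\Y,k}$ and split the sum over elements into two regimes. For cells $T = K \times I_l$ with $l \ge 1$, i.e.\ away from $y = 0$, apply the anisotropic second-order local estimates recalled before Theorem~\ref{TH:fl_error_estimates}, together with the regularity bound \eqref{reginx} for $x'$-derivatives and \eqref{reginy} for the $y$-derivative. The latter uses the heavier weight $y^{\beta}$ with $\beta > 2\alpha + 1$, so I would absorb the change of weight via $\|y^{\alpha-\beta}\|_{L^\infty(I_l)} = y_{l-1,k}^{\alpha-\beta}$ (finite since $y_{l-1,k} > 0$) and then invoke the graded rule \eqref{graded_mesh}. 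The exponent $\gamma > 3/(1-\alpha)$ is precisely what makes the weighted product $h_{I_l}^{2}\,y_{l-1,k}^{\alpha-\beta}$ uniformly comparable to $\Y^{2s} M_k^{-2}$ across all $l \ge 1$. For the first layer $T = K \times I_0$, where $\ve_{yy} \notin L^2(y^\alpha,\C)$, I would fall back on the first-order stability bound \eqref{dPIcontinuous} of Proposition~\ref{TH:v - PivH1} and estimate $\|\partial_y v\|_{L^2(y^\alpha,S_T)}$ directly, using that $y_{1,k} = \Y M_k^{-\gamma}$ is very small.

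Summing the local contributions and combining with the first two steps gives
\begin{equation*}
\|\nabla(\ve - V_{\T_{\Y,k}})\|_{L^2(y^\alpha,\C)} \lesssim \Y^{s} M_k^{-1}\|f\|_{\Ws} + e^{-\sqrt{\lambda_1}\Y/4}\|f\|_{\Hsd}.
\end{equation*}
Using $\#\T_{\Y,k}\approx M_k^{n+1}$ and balancing $\Y\approx \log\#\T_{\Y,k}$ to absorb the exponential term yields the stated bound. The main obstacle is the weighted, element-wise bookkeeping in the third step: the interplay between the non-integer exponents $\alpha$ and $\beta$, the grading rule \eqref{graded_mesh}, and the pointwise singularity of $\ve_{yy}$ near $y=0$ must be tracked carefully, and it is from this balance that the logarithmic factor $|\log\#\T_{\Y,k}|^{s}$ emerges.
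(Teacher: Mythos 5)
The paper does not contain its own proof of this statement: the theorem is recalled verbatim from~\cite{NOS} (the text preceding it says ``All these considerations allow us to obtain the following result [Theorem~5.4 and Remark~5.5]{NOS}''). Your job was therefore to reconstruct the argument, and your three-step plan --- exponential truncation of the cylinder, C\'ea's lemma, and element-by-element anisotropic interpolation on the graded mesh --- is indeed the strategy used in~\cite{NOS}.

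That said, there is a concrete gap in the interpolation bookkeeping. You attribute the entire logarithmic factor $|\log(\#\T_{\Y,k})|^s \eqsim \Y^s$ to the weight trade on $\ve_{yy}$, i.e.\ to controlling
\[
h_{I_l}^2\, y_{l-1,k}^{\alpha-\beta}\lesssim \gamma^2\,\Y^{2+\alpha-\beta} M_k^{-2}\eqsim \gamma^2\,\Y^{2s} M_k^{-2}
\]
via the grading \eqref{graded_mesh}; that part is correct. But the anisotropic local estimate recalled before the theorem has a second term for \emph{every} derivative $\partial_{x_j}$, $j=1,\dots,n+1$:
\[
\|\partial_{x_j}(v-\Pi_{\T_\Y}v)\|_{L^2(y^\alpha,T)}\lesssim
h_{\vero'}\|\nabla_{x'}\partial_{x_j}v\|_{L^2(y^\alpha,S_T)}
+ h_{\vero''}\|\partial_y\partial_{x_j}v\|_{L^2(y^\alpha,S_T)}.
\]
For $j\ne n+1$ the regularity bound available is \eqref{reginx}, which controls $\partial_y\nabla_{x'}\ve$ in $L^2(y^\alpha,\C)$ (not in the heavier $y^\beta$ weight), so there is no weight to trade. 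Summing over elements and using the crude bound $h_{\vero''}\lesssim\gamma\Y/M_k$ from \eqref{eq:mesh_rels}, this term contributes $\gamma\,\Y\, M_k^{-1}\|f\|_{\Ws}$, i.e.\ a full power of $\Y$ rather than $\Y^{s}$. Your sketch never addresses how this apparent $\Y^1$ is reduced to $\Y^s$; this requires a sharper, element-localized argument (using the local meshsize $h_{I_l}\eqsim\gamma\Y^{1/\gamma}y_{l,k}^{1-1/\gamma}/M_k$ together with additional information on the $y$-decay of $\partial_y\nabla_{x'}\ve$) rather than just the global bound $\max_l h_{I_l}\lesssim\gamma\Y/M_k$. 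Without this, the argument as written would only yield the weaker factor $|\log(\#\T_{\Y,k})|$, so the claim that ``it is from this balance that the logarithmic factor $|\log\#\T_{\Y,k}|^{s}$ emerges'' is not justified by the steps you outline.

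Two smaller points. First, after choosing $\Y\eqsim\log\#\T_{\Y,k}$ the truncation error becomes $(\#\T_{\Y,k})^{-c}$ for a constant $c$ proportional to the implicit constant in $\Y\eqsim\log\#\T$; one must verify this can be made at least $1/(n+1)$, which you implicitly assume. Second, all three $f$-norms appearing ($\|f\|_{\Hsd}$, $\|f\|_{L^2(\Omega)}$, $\|f\|_{\Ws}$) must be collected into $\|f\|_{\Ws}$ --- this is fine since $\Ws\hookrightarrow L^2(\Omega)\hookrightarrow\Hsd$, but it is worth stating.
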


We notice that the anisotropic meshes of the cylinder $\C_\Y$ considered above
are semi-structured by construction. They are generated as the tensor
product of an unstructured grid $\T_{\Omega}$ together with the structured mesh
$\mathcal{T}_k$. Figure~\ref{fig:ctm} shows an example of this type of meshes in three dimensions.

\begin{figure}[ht]
\includegraphics[scale=0.44]{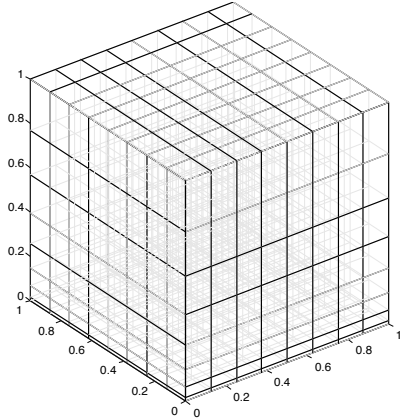}
\caption{
A three dimensional graded mesh of the cylinder
$(0,1)^2 \times (0,\Y)$ with $392$ degrees of freedom. The mesh is constructed
as a tensor product of a quasi-uniform mesh of $(0,1)^2$ with cardinality $49$
and the image of the quasi-uniform partition of the interval $(0,1)$
with cardinality $8$ under the mapping \eqref{graded_mesh}. }
\label{fig:ctm}
\end{figure}

Notice that the approximation
estimates \eqref{PiLpbounded}-\eqref{v - PivW1p} are local and
thus valid under the weak shape regularity condition \eqref{shape_reg_weak}.
Owing to the tensor product structure of the mesh, we have the following anisotropic error estimate.

\begin{lemma}[weighted $L^2$ anisotropic error estimate]
\label{le:l2aniso}
Let $v \in \HL(y^\alpha, \C_\Y)$ be  the solution of problem \eqref{alpha_harmonic_extension_weak_T}. Then, the quasi-interpolation
operator $\Pi_{\T_\Y}$ satisfies the following error estimate
\[
\| v- \Pi_{\T_\Y} v\|_{L^2(y^{\alpha}, \C_\Y)} \lesssim \# \T_{\Y}^{-1/(n+1)}
\left(
  \| \nabla_{x'}v \|_{L^2(y^{\alpha},\C_{\Y})} + \|  \partial_{y} v \|_{L^2(y^{\alpha},\C_{\Y})}
\right).
\]
\end{lemma}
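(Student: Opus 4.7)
The strategy is a straightforward elementwise argument, combining the anisotropic version of the local $L^2$ estimate mentioned in Section~\ref{sub:anisoFE} with the geometric properties of the tensor-product graded mesh $\T_{\Y}$. The key observation is that, although $\T_{\Y}$ is anisotropic by design, the maximum horizontal step and the maximum vertical step are both of order $M_k^{-1}\approx (\#\T_{\Y})^{-1/(n+1)}$, so summing the local anisotropic bounds produces a single factor $(\#\T_{\Y})^{-1/(n+1)}$ in front of the $H^1$-type seminorm on the right-hand side.

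First, I would invoke the local anisotropic interpolation estimate recorded just before the lemma, namely
\[
\| v-\Pi_{\T_{\Y}} v\|_{L^{2}(y^{\alpha},T)}
\lesssim
h_{\verot'}\,\|\nabla_{x'} v\|_{L^{2}(y^{\alpha},S_{T})}
+ h_{\verot''}\,\|\partial_{y} v\|_{L^{2}(y^{\alpha},S_{T})},
\]
which, for cells $T=K\times I$ of the tensor-product mesh, follows from Propositions~\ref{TH:v - PivL2}--\ref{TH:v - PivH1} applied direction-wise together with the weak shape-regularity \eqref{shape_reg_weak}; the hidden constant depends only on $C_{2,y^{\alpha}}$, $\psi$ and $\sigma_{\Y}$. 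I would then square this inequality, sum over all $T\in\T_{\Y}$, and use the finite overlap of the patches $\{S_{T}\}_{T\in\T_{\Y}}$ (a consequence of shape regularity together with \eqref{shape_reg_weak}) to obtain
\[
\| v-\Pi_{\T_{\Y}} v\|_{L^{2}(y^{\alpha},\C_{\Y})}^{2}
\lesssim
\Bigl(\max_{T}h_{\verot'}\Bigr)^{2}\,\|\nabla_{x'} v\|_{L^{2}(y^{\alpha},\C_{\Y})}^{2}
+\Bigl(\max_{T}h_{\verot''}\Bigr)^{2}\,\|\partial_{y} v\|_{L^{2}(y^{\alpha},\C_{\Y})}^{2}.
\]

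It remains to bound the two maxima by $(\#\T_{\Y})^{-1/(n+1)}$. For the horizontal sizes, the quasi-uniformity of $\T_{\Omega,k}$ yields $\max_{T} h_{\verot'}\lesssim h_{\T_{\Omega,k}}\approx (\#\T_{\Omega,k})^{-1/n}\approx M_{k}^{-1}$. For the vertical sizes, the largest interval produced by the grading $y_{l,k}=\Y\,\widehat{y}_{l,k}^{\gamma}$ has width $\Y\bigl(1-(1-1/M_{k})^{\gamma}\bigr)\approx \gamma\Y M_{k}^{-1}$, so $\max_{T}h_{\verot''}\lesssim \Y M_{k}^{-1}$. Since $\#\T_{\Y}\approx M_{k}^{n+1}$, both maxima are bounded by a constant multiple of $(\#\T_{\Y})^{-1/(n+1)}$, with the dependence on $\Y$ and $\gamma$ absorbed in the hidden constant. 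Substituting these bounds gives the claim.

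The main subtlety, and the step I expect to require the most care, is the vertical-size estimate: because of the grading exponent $\gamma>3/(1-\alpha)$, the mesh is strongly anisotropic near $y=0$, yet the $L^{2}$ (as opposed to higher-order) error estimate only sees the \emph{maximum} step, which occurs at $y\approx\Y$ and is indeed comparable to $M_{k}^{-1}$. Once this uniform vertical bound is identified, the remainder of the argument is a routine summation over elements with bounded patch overlap.
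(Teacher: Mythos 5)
Your proposal is correct and fleshes out precisely what the paper's one-line proof (citing \cite[\S 5]{NOS2} and the Cartesian structure of $\T_\Y$) leaves implicit: the local anisotropic $L^2$ bound from \S\ref{sub:anisoFE}, the summation over elements with finite patch overlap, and the two uniform bounds $\max_T h_{\verot'} \lesssim M_k^{-1}$ and $\max_T h_{\verot''} \lesssim \gamma\Y M_k^{-1}$ (the latter being exactly \eqref{eq:mesh_rels}), combined with $\#\T_{\Y} \approx M_k^{n+1}$. One small inaccuracy worth flagging: you describe the local anisotropic estimate as "Propositions~\ref{TH:v - PivL2}--\ref{TH:v - PivH1} applied direction-wise," but those two propositions are the isotropic versions; the anisotropic local estimate is a separate result of \cite{NOS2} that exploits the tensor-product cell structure directly, and the paper simply quotes it in \S\ref{sub:anisoFE} rather than rederiving it from the isotropic propositions. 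You are also right that the hidden constant picks up a factor $\gamma$, which is consistent with the $\gamma$-dependence the paper later makes explicit in Theorem~\ref{TH:convergence_2}.
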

\begin{proof}
This is a direct consequence of the results from~\cite[\S 5]{NOS2} together with the Cartesian
structure of the mesh $\T_{\Y}$.
\end{proof}

A simple application of the mean value theorem yields
\begin{equation}
  y_{l+1,k} - y_{l,k} = \frac\Y{M_k^\gamma} \big( (l+1)^\gamma - l^\gamma \big)
  \leq \gamma \frac\Y{M_k} \left( \frac{l+1}{M_k} \right)^{\gamma-1} \leq \gamma \frac\Y{M_k},
\label{eq:mesh_rels}
\end{equation}
for every $l=0,\ldots, M_k-1$, where $\gamma > 3/(1-\alpha) = 3/(2s)$ according to \eqref{graded_mesh}.
In other words, since the meshsize of the quasi-uniform mesh $\T_{\Omega,k}$ is $\mathcal{O}(M_k^{-1})$, the size of the partitions
in the extended variable $y$ can be uniformly controlled by $h_{\T_{\Omega,k}}$ for $k=0,\dots,J$. However,
$\gamma$ blows up as $s \downarrow 0$.

\section{Multilevel space decomposition and multigrid methods}
\label{sec:MG_degenerate}
In this section, we present a $\mathcal V$-cycle multigrid algorithm based
on the method of subspace corrections \cite{BPWX:91,Xu1992siamreview},
and we present the key identity of Xu and Zikatanov \cite{XuZ02}
in order to analyze the convergence of the proposed multigrid algorithm.

\subsection{Multilevel decomposition}
\label{sub:multilevel}
We follow \cite{BP:87,BPWXreg:91} to present a multilevel decomposition of the space $\V(\T)$.
Assume that we have an initial conforming mesh $\T_0$ made
of simplices or cubes, and a nested
sequence of discretizations $\{ \T_k \}_{k=0}^{J}$ where, for $k>0$,
$\T_k$ is obtained by uniform refinement of $\T_{k-1}$.
We then obtain a nested sequence,
in the sense of trees, of quasi-uniform meshes
\[
  \T_0 \leq \T_1 \leq \dots \leq \T_J = \T.
\]
Denoting by $h_k := h_{\T_{k}}$ the meshsize of the mesh $\T_k$, we have that
$h_k \eqsim \rho^{k}$ for some $\rho \in (0,1)$, and then $J \eqsim |\log h_J|$.
Let $\V_k:= \V(\T_k)$ denote the corresponding
finite element space over $\T_k$ defined by \eqref{fe_space}. We thus
get a sequence of nested
spaces
\[
  \V_0 \subset \V_1 \subset \dots \subset \V_J = \V,
\]
and a macro space decomposition
\[
  \V = \sum_{k=0}^{J} \V_k.
\]
Note the redundant overlapping of the multilevel decomposition above; in particular,
the sum is not direct. We now introduce a space micro-decomposition. We start by defining
$\N_{\,k}:= \N(\T_k) = \dim \V_k$, \ie the number of interior vertices of the mesh $\T_k$.
In order to deal with point and line Gauss-Seidel smoothers, we introduce the following sets of indices:
For $j=1,\ldots,\M_k$ we denote by
$\calI_{k,j}$ a subset of the index set $\{1, 2, \ldots , \N_k\}$, and assume $\calI_{k,j}$ satisfies
\[
 \bigcup_{j=1}^{\M_k} \calI_{k,j} = \{1, 2, \ldots , \N_k\}.
\]
The sets $\calI_{k,j}$ may overlap, \ie given $0<j_1, j_2 \leq \M_k$
such that $j_1 \neq j_2$, we may have $\calI_{k,j_1} \cap \calI_{k,j_2} \neq \emptyset$. This overlap, however,
is finite and independent of $J$ and $\N_{\,k}$.

Upon denoting the standard nodal basis of $\V_k$ by $\phi_{k,i}$, $i=1,\dots,\N_k$,
we define $\V_{k,j} = \Span \{ \phi_{k,i}: i\in \calI_{k,j}\}$ and we have the space decomposition
\begin{equation}
\label{V=decomp}
  \V = \sum_{k=0}^{J} \sum_{j=1}^{\M_k} \V_{k,j}.
\end{equation}

\subsection{Multigrid algorithm}
\label{sub:multigrid}
We now describe the multigrid algorithm for the non-uniformly elliptic problem
\eqref{weighted_second}. We start by introducing several auxiliary operators.
For $k=0,\ldots,J$, we define
the operator $A_k: \V_k \rightarrow \V_k$ by
\[
  (A_k v_k,w_k)_{L^2(\omega,D)} = a(v_k,w_k), \quad \forall v_k,w_k \in \V_k,
\]
where the bilinear form $a$ is defined in \eqref{eq:defofaform}.
Notice that this operator is symmetric and positive definite with respect to the weighted $L^2$-inner
product.
The projection operator $P_k: \V_J \rightarrow \V_k$ in the $a$-inner product is
defined by
\[
  a(P_k v,w_k) = a(v,w_k), \quad \forall w_k \in \V_k,
\]
and the weighted $L^2$-projection $Q_k: \V_J \rightarrow \V_k$ is defined by
\[
  (Q_k v,w_k)_{L^2(\omega,D)} = (v,w_k)_{L^2(\omega,D)}, \quad w_k \in \V_k.
\]

We define, analogously, the operators
$A_{k,j}:\V_{k,j} \rightarrow \V_{k,j}$,
$P_{k,j}:\V_k \rightarrow \V_{k,j}$ and
$Q_{k,j}:\V_k \rightarrow \V_{k,j}$.
The operator $A_{k,j}$ can be regarded as the restriction of $A_k$ to the subspace $\V_{k,j}$, and
its matrix representation,
which is the sub-matrix of $A_k$ obtained by deleting the indices $i \notin \calI_{k,j}$,
is symmetric and positive definite.
On the other hand, the operators $P_{k,j}$ and $Q_{k,j}$ denote the
projections with respect to the $a$- and the weighted $L^2$-inner
products into $\V_{k,j}$, respectively.
We also remark that the matrix representation of the operator $Q_{k,j}$
is the so-called restriction operator, and
the prolongation operator $Q_{k,j}^T$ corresponds to the natural embedding $\V_{k,j} \hookrightarrow \V_k$.
The following property, which is of fundamental importance, will be used frequently in the
paper
\begin{equation}
\label{eq:commutator}
  A_{k,j}P_{k,j} = Q_{k,j}A_k.
\end{equation}

With this notation we define a symmetric
$\mathcal{V}$-cycle multigrid method as in Algorithm~\ref{alg:Vcycle}.
When $m=1$, it is equivalent to the application of successive subspace
corrections (SSC) to the decomposition \eqref{V=decomp} with
exact sub-solvers $A_{k,j}^{-1}$ so that
the $\mathcal{V}$-cycle multigrid method has a smoother at each level of block Gauss-Seidel
type \cite{BPWXreg:91,Xu1992siamreview}. In particular, if we consider
a nodal decomposition $\calI_{k,j} = \{j\}$ we obtain a point-wise Gauss-Seidel smoother.
On the other hand, if the indices in $\calI_{k,j}$ are such that the corresponding vertices lie on a straight line,
we obtain the so-called line Gauss-Seidel smoother, which will be
essential to efficiently solve problem \eqref{alpha_harm_intro}
with anisotropic elements.

\begin{algorithm}
\label{alg:Vcycle}
\SetKwData{MG}{MG}
\SetKwInOut{Input}{input}\SetKwInOut{Output}{output}

$e = \MG(r,k,m)$

\Input{$r \in \V_k$ --- residual; \\
       $k \in \{0,\ldots J \}$ --- level; \\
       $m \in \polN$ --- number of smoothing steps.}
\Output{$e \in \V_k$ --- an approximate solution of $A_k e = r$.}

\If{$k=0$}{
  $e=A_{0}^{-1}r$\;
}

\tcp{pre-smoothing: $m$ steps}

$u^0 = 0$\;

\For{$l\leftarrow 1$ \KwTo $m$}{
  $v\leftarrow u^{l-1}$\;
  \For{$j\leftarrow 1$ \KwTo $\M_k$}{
    $v\leftarrow v+A_{k,j}^{-1}Q_{k,j}(r-A_{k}v)$\;
  }
  $u^{l}\leftarrow v$\;
}

\tcp{coarse grid correction}

$u^{m+1} = u^m + \MG\left(Q_{k-1}(r-A_{k}u^{m}), k-1, m\right)$\;

\tcp{post-smoothing: $m$ steps}

\For{$l\leftarrow m+2$ \KwTo $2m+1$}{
  $v\leftarrow u^{l-1}$\;
  \For{$j \leftarrow  \M_{k}$ \KwTo $1$}{
    $v\leftarrow v+A_{k,j}^{-1}Q_{k,j}(r-A_{k}v)$\;
  }
  $u^{l}\leftarrow v$\;
}

\tcp{output}
$e=u^{2m+1}$\;

\caption{Symmetric $\mathcal V$-cycle multigrid method}
\end{algorithm}

\subsection{Analysis of the multigrid method}
\label{sub:mganalysisdegenerate}
In order to prove the nearly uniform convergence of the symmetric $\mathcal{V}$-cycle multigrid method
without any assumptions,
we rely on the following fundamental identity developed by Xu and Zikatanov \cite{XuZ02}; see also
\cite{ChenXZ,CXZ:08} for alternative proofs.

\begin{theorem}[XZ Identity]
\label{XZ}
Let $\V$ be a Hilbert space with inner product $(\cdot,\cdot)_A$ and norm $\|\cdot\|_A$.
Let $\V_j \subset \V$ be a closed subspace of $\V$ for $j=0,\ldots,J$, satisfying
\[
  \V  = \sum_{j=0}^{J} \V_j.
\]
Denote by $P_j: \V \rightarrow \V_j$ the orthogonal projection in the inner product
$(\cdot,\cdot)_A$ onto $\V_j$. Then, the following identity holds
\[
  \left \|  \prod_{j=0}^J (I-P_j) \right\|_{A}^2 = 1 -\frac{1}{1+c_0},
\]
where
\begin{equation}
\label{c_0}
c_0 = \sup_{\|\nu\|_{A}=1} \inf_{\sum_{i=0}^J \nu_i = \nu}
    \sum_{i=0}^J \left\| P_i \sum_{j=i+1}^J \nu_j \right\|_{A}^2.
\end{equation}
\end{theorem}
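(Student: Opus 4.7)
The plan is to express the operator norm $\|E_J\|_A$ of the error propagator $E_J = \prod_{j=0}^J (I-P_j)$ purely in terms of decompositions along the subspaces $\V_j$, following the original argument of Xu--Zikatanov.

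I would first introduce the partial products $E_k = (I-P_k) E_{k-1}$ (with $E_{-1} = I$) and the increments $w_j := P_j E_{j-1} v \in \V_j$. The Pythagorean identity $\|E_{j-1} v\|_A^2 = \|E_j v\|_A^2 + \|w_j\|_A^2$, which holds because each $P_j$ is $A$-orthogonal, iterates to the telescoping identities
$$v - E_J v = \sum_{j=0}^J w_j, \qquad \|v\|_A^2 - \|E_J v\|_A^2 = \sum_{j=0}^J \|w_j\|_A^2.$$
Thus it suffices to prove, pointwise in $v$, the identity $\|E_J v\|_A^2/\|v\|_A^2 = c_0(v)/(1+c_0(v))$, where $c_0(v) := \inf_{\sum v_i = v}\sum_i \| P_i \sum_{j>i} v_j\|_A^2$ is the infimum appearing in the definition of $c_0$, since applying $\sup_{\|v\|_A = 1}$ together with the monotonicity of $t \mapsto t/(1+t)$ then yields the desired operator identity.

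For any admissible decomposition $v = \sum_i v_i$ with $v_i \in \V_i$ (which exists since $\V = \sum_j \V_j$), I would expand $\|v\|_A^2 = (\sum_i v_i, \sum_i v_i)_A$ and use the self-adjointness of $P_i$ together with $v_i = P_i v_i$ to derive the algebraic identity
$$\|v\|_A^2 = \sum_{i=0}^J \|v_i\|_A^2 + 2 \sum_{i=0}^J \Bigl( v_i,\, P_i \sum_{j>i} v_j \Bigr)_A.$$
A weighted Cauchy--Schwarz applied to the cross term produces exactly the quantity $\sum_i \|P_i \sum_{j>i} v_j\|_A^2$ defining $c_0(v)$, while the diagonal term $\sum_i \|v_i\|_A^2$ is controlled via the telescoping from the first step applied to a judicious choice of $\{v_i\}$. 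Optimizing over decompositions then yields one-sided control $\|E_J v\|_A^2/\|v\|_A^2 \le c_0(v)/(1+c_0(v))$.

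The matching reverse bound---the technical core of the argument---requires exhibiting a specific decomposition that saturates the Cauchy--Schwarz step. The naive ansatz built directly from the increments $\{w_j\}$ is inadmissible because $E_J v$ in general lies in no single $\V_j$, so the minimizer must instead be characterized as the solution of an explicit block lower-triangular system whose off-diagonal entries are the projections $P_i$ acting on future components. Constructing this minimizer, and verifying that it indeed realizes $c_0(v)$, is the main obstacle; once this is done, the pointwise identity follows and, as noted above, the operator-norm identity is obtained by passing to the supremum.
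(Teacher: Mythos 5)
The paper does not prove Theorem~\ref{XZ}; it states it and cites \cite{XuZ02} (together with the alternative derivations in \cite{ChenXZ,CXZ:08}), so there is no in-paper proof to compare against. Judged on its own terms, your outline contains a genuine gap.

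The proposed reduction---that it ``suffices to prove, pointwise in $v$,'' the identity $\|E_J v\|_A^2/\|v\|_A^2 = c_0(v)/(1+c_0(v))$---is false, and the ``matching reverse bound'' you defer to as the technical core does not exist. Take $\V=\R^2$ with the Euclidean inner product, $\V_0$ the span of $(1,0)$ and $\V_1$ the span of $(1,1)$. A direct computation gives $E_J(a,b) = (I-P_1)(I-P_0)(a,b) = (-b/2,\,b/2)$, so $\|E_J v\|_A^2/\|v\|_A^2 = b^2/\bigl(2(a^2+b^2)\bigr)$. The decomposition $v=v_0+v_1$ with $v_i\in\V_i$ is unique, with $v_1=(b,b)$, hence $c_0(v)/\|v\|_A^2 = \|P_0 v_1\|_A^2/\|v\|_A^2 = b^2/(a^2+b^2)$, and the right-hand side of your claimed identity is $b^2/(a^2+2b^2)$. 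At $a=b$ the two sides are $1/4$ and $1/3$. Both expressions do attain the common supremum $1/2$ at $(a,b)=(0,1)$, in agreement with the theorem (here $c_0=1$), which is precisely the point: the XZ identity is an extremal statement about the operator norm and does not localize to individual vectors $v$.

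A second, related gap: the increments $w_j=P_jE_{j-1}v$ satisfy $\sum_j w_j = v-E_Jv$, not $\sum_j w_j = v$, so they do not form an admissible decomposition of $v$ and cannot serve as the ``judicious choice of $\{v_i\}$'' by which you propose to control $\sum_i\|v_i\|_A^2$. The argument in \cite{XuZ02} does begin from exactly the telescoping identities you wrote, but uses them only to express $1-\|E_J\|_A^2 = \inf_{\|v\|_A=1}\sum_j\|w_j\|_A^2$; the real content is a duality between this minimization over $v$ on the unit sphere and the $\sup\inf$ defining $c_0$, carried out globally. Repairing the proposal requires abandoning the pointwise reduction and working with the two extremal problems directly.
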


The XZ identity given by Theorem~\ref{XZ}, the properties of the interpolation operator $\Pi_{\T}$ defined in
\S\ref{sub:interpolation_operator}, the stability of the nodal decomposition stated in Lemma~\ref{lem:microStab} below,
and the weighted inverse inequality proved in Lemma~\ref{le:inv_est}
below, will allow us to obtain the nearly uniform convergence of the symmetric $\mathcal V$-cycle
multigrid method described in Algorithm \ref{alg:Vcycle}, without resorting to any regularity
assumptions on the solution.
To see how this is possible we recall the basic ingredients in the analysis of multilevel methods;
see \cite{BP:87, BPWXreg:91, MR1804746,Xu1992siamreview} for details.
We introduce, for $k=1,2,\ldots,J$, the operator
\begin{align*}
  K_k &= (I-A_{k,\M_k}^{-1}Q_{k,\M_k}A_{k}) \cdots (I-A_{k,1}^{-1}Q_{k,1}A_{k}) \\
      &= (I-P_{k,\M_k})\cdots (I-P_{k,1})
      = \prod_{j=1}^{\M_{k}}(I-P_{k,j}),
\end{align*}
where we used \eqref{eq:commutator} to obtain the second equality.
With this notation, Algorithm~\ref{alg:Vcycle} can then be recast as a two-layer iterative scheme
for the solution of $A_J u = f$ of the form
\[
  u^{\ell+1} = u^\ell + B_J ( f - A_J u^\ell ),
\]
where the iterator $B_J$ satisfies
\[
  I-B_J A_J = \left(K_J^m\right)^* \cdots \left(K_1^m\right)^*(I-P_0)
              K_1^m \cdots K_J^m,
\]
with $M^{*}$ denoting the adjoint operator of $M$ with respect to the $a$--inner product. Notice
that $I - B_J A_J$ is the so-called error transfer operator \ie
\[
  u - u^{\ell+1} = \left( I - B_J A_J \right)\left( u - u^\ell \right).
\]
Consequently, to show convergence of our scheme we must show that this operator is a contraction
with a contraction factor, ideally, independent of $J$. Owing to the fact that
\[
  \|K_k^m\|_{A}\leq\|K_k\|^m_{A}\leq\|K_k\|_{A},
\]
it suffices to consider the case $m=1$, where, given an operator $S$, we denote by $\| S \|_A$ the operator norm induced by
the bilinear form $a$.
Therefore
\begin{equation}
\label{eq:erroroperator}
  \| I - B_J A_J \|_A
                      \leq \left\| \prod_{k=0}^J \prod_{j=1}^{\M_{k}}(I-P_{k,j}) \right\|_A^2,
\end{equation}
because $P_0$ is an exact solve and thus a projection, whence $(I-P_0)^2 = I-P_0$.
Notice that the right hand side of \eqref{eq:erroroperator} is precisely the quantity that the XZ identity
provides a value for. In conclusion, based on Theorem~\ref{XZ}, to prove
the convergence of the symmetric $\mathcal{V}$-cycle multigrid method described in Algorithm \ref{alg:Vcycle},
we must obtain an estimate for the constant $c_0$ given by \eqref{c_0}, which will be the content
of the next two sections.

\section{Analysis of Multigrid Methods on Quasi-uniform Grids}
\label{sec:MGQunif}

In this section we consider the $\mathcal V$-cycle multigrid method described in
Algorithm~\ref{alg:Vcycle} applied to solve the weighted discrete problem
\eqref{weighted_discrete_2} on quasi-uniform meshes. We consider
standard pointwise Gauss-Seidel smoothers and prove
the convergence of Algorithm \ref{alg:Vcycle} with a nearly optimal rate
up to a factor $J \eqsim |\log h_J|$.
Our main contribution is the extension of
the standard multigrid analysis \cite{Brandt1977,Brandt1984,Hackbusch1985,Xu1992siamreview}
to include weights belonging
to the Muckenhoput class $A_2(\R^N)$.
An optimal result for weights in the $A_1(\R^N)$-class is derived  in \cite{Griebel2007}.
Nevertheless, since our main motivation is the fractional Laplacian,
and the weight $y^{\alpha}\in A_2(\R^N) \setminus A_1(\R^N)$,
we need to consider the larger class $A_2(\R^N)$.

\subsection{Stability of the nodal decomposition in the weighted $L^2$-norm}
The following result states that the nodal decomposition is stable in the weighted $L^2$-norm
or, equivalently, the mass matrix for this inner product is spectrally equivalent to its diagonal.

\begin{lemma}[stability of the nodal decomposition]
\label{lem:microStab}
Let $\T \in \Tr$ be a quasi-uniform mesh, and let $v \in \V(\T)$. Then,
we have the following norm equivalence
\begin{equation}
\label{eq:weighted_stability}
  \sum _{i=1}^{\N(\T)}\|v_i\|_{L^2(\omega,D)}^2
  \lesssim
  \left\| v\right\|_{L^2(\omega,D)}^2
  \lesssim
  \sum _{i=1}^{\N(\T)}\|v_i\|_{L^2(\omega,D)}^2,
\end{equation}
where $v = \sum_{i=1}^{\N(\T)} v_{i}$ denotes the nodal decomposition for $v$, and the hidden constants in each
inequality above only depend on the dimension and the $A_2$-constant of the weight $\omega$.
\end{lemma}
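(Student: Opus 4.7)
The plan is to reduce the norm equivalence \eqref{eq:weighted_stability} to the local, element-wise equivalence
\[
  \int_T v^2 \omega \diff x \eqsim \omega(T) \sum_{\vero \in \N(T)} v(\vero)^2,
\]
valid for every $T \in \T$ and every $v$ with $v|_T \in \Pp(T)$. Granting this, summing over $T$ and using $\omega(S_\vero) = \sum_{T \ni \vero} \omega(T)$ together with the finite overlap of the stars $\{S_\vero\}$ (which follows from shape regularity) yields
\[
  \|v\|_{L^2(\omega,D)}^2 \eqsim \sum_{\vero \in \N(\T)} v(\vero)^2\, \omega(S_\vero).
\]
To identify the right-hand side with $\sum_{i} \|v_i\|_{L^2(\omega,D)}^2 = \sum_{\vero} v(\vero)^2 \|\phi_\vero\|_{L^2(\omega,D)}^2$, it suffices to prove the basis-function equivalence $\|\phi_\vero\|_{L^2(\omega,D)}^2 \eqsim \omega(S_\vero)$. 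The upper bound is immediate from $0 \leq \phi_\vero \leq 1$; the lower bound uses that $\phi_\vero \geq 1/2$ on a subset $E_\vero \subset S_\vero$ with $|E_\vero| \geq c|S_\vero|$ (shape regularity), combined with the strong doubling inequality \eqref{strong_double} of Proposition~\ref{pro:double}, which gives $\omega(E_\vero) \gtrsim \omega(S_\vero)$.

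The main work is then the local equivalence above. The bound $\int_T v^2 \omega \leq \|v\|_{L^\infty(T)}^2 \omega(T) \lesssim \omega(T) \sum_{\vero \in \N(T)} v(\vero)^2$ is routine and uses equivalence of norms on the finite-dimensional space $\Pp(T)$ on a shape-regular element. The reverse direction is the delicate point, and is where the $A_2$ hypothesis is essential: an $A_2$-weight may be heavily concentrated inside $T$, so no pointwise control on $\omega$ is available, and one must rule out the possibility that the weight ``hides'' in a region where $v$ is small. The key tool is a Remez/Chebyshev-type inequality for polynomials on a shape-regular element, which asserts that
\[
  E := \bigl\{ x \in T : v(x)^2 \geq \tfrac12 \|v\|_{L^\infty(T)}^2 \bigr\}
\]
satisfies $|E| \geq c_0 |T|$ with $c_0$ depending only on $N$, $\sigma$, and the fixed polynomial degree. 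Applying Proposition~\ref{pro:double} with $p=2$ on a ball containing $T$ then yields $\omega(T) \lesssim c_0^{-2} C_{2,\omega}\, \omega(E)$, so that
\[
  \int_T v^2 \omega \diff x \geq \int_E v^2 \omega \diff x \geq \tfrac12 \omega(E)\, \|v\|_{L^\infty(T)}^2 \gtrsim \omega(T) \sum_{\vero \in \N(T)} v(\vero)^2.
\]

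Combining these ingredients produces both inequalities of \eqref{eq:weighted_stability} with constants depending only on $N$, $\sigma$, and $C_{2,\omega}$, as claimed. The hard part is the Remez step and its interaction with strong doubling, because this is where one must genuinely exploit that an $A_2$-weight cannot destroy the norm equivalence on the finite-dimensional space $\Pp(T)$; the $A_2$ constant enters only through the doubling inequality. Note that quasi-uniformity plays no role in this argument beyond ensuring a uniform shape-regularity constant: everything is purely local.
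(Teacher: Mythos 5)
Your proof is correct and its overall structure --- a local element-wise norm equivalence, the identification $\|\phi_\vero\|_{L^2(\omega,D)}^2\eqsim\omega(S_\vero)$ via strong doubling, and then summation over elements using finite overlap --- is essentially the paper's. The one substantive difference is how the lower bound in the local equivalence is obtained: the paper cites a reference-element estimate and appeals to the invariance of the $A_2$ class under isotropic dilations, whereas you derive it from scratch by combining a Remez-type inequality (the set $\{v^2\geq\tfrac12\|v\|^2_{L^\infty(T)}\}$ occupies a fixed fraction of $T$) with Proposition~\ref{pro:double}. Your route is more self-contained and has the virtue of making explicit where the $A_2$ hypothesis genuinely enters --- only through the doubling property --- and your closing remark that quasi-uniformity plays no role beyond shape regularity is accurate for both arguments.
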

\begin{proof}
Let $\widehat T \subset \R^N$ be a reference element and $\{ \widehat \phi_1, \dots, \widehat \phi_{\N_{\,\widehat T}} \}$
be its local shape functions, where $\N_{\,\widehat T}$ is the number of vertices of $\widehat T$. A standard argument shows
\[
  \widehat c_1 \left( \int_{\hat{T}} \widehat \omega \right) \sum_{i=1}^{\N_{\widehat T}} \widehat V_i^2 \leq
  \left\| \widehat v \right\|_{L^2(\widehat\omega, \widehat T)}^2
  \leq \widehat c_2 \left( \int_{\hat{T}} \widehat \omega \right) \sum_{i=1}^{\N_{\widehat T}} \widehat V_i^2,
\]
where $0 < \widehat c_1 \leq \widehat c_2$,
$\widehat v = \sum_{i=1}^{\N_{\widehat T}} \widehat V_i \widehat\phi_i$ and $\widehat\omega$ is a weight;
see \cite[Lemma~9.7]{MR2050138}. Now, given $T \in \T$, we denote by $F_{T}: \widehat T \rightarrow T$ the mapping
such that $\hat v = v \circ F_T$. Since the $A_2$ class is invariant
under isotropic dilations \cite[Proposition~2.1]{NOS2}, a scaling argument shows
\[
  \left( \int_T \omega \right)
  \sum_{i=1}^{\N_T} V_i^2 \lesssim
  \left\| v \right\|_{L^2(\omega, T)}^2
  \lesssim
  \left( \int_T \omega \right)
  \sum_{i=1}^{\N_T} V_i^2.
\]
It remains thus to show that $\int_T \omega \approx \int_T \omega \phi_i^2$.
The fact that $0 \leq \phi_i \leq 1$ yields immediately
\[
  \int_T \omega \phi_i^2\leq \int_T \omega .
\]
The converse inequality follows from the \emph{strong doubling property} of $\omega$
given in Proposition~\ref{pro:double}. In fact, setting
$E = \{ x \in T: \phi_i^2 \geq \tfrac12 \} \subset T$, we have
\[
 \int_{T} \omega \phi_i^2 \geq \int_{E} \omega \phi_i^2 \geq
 \frac{1}{2} \int_E \omega \geq \frac{1}{2C_{2,\omega}} \left( \frac{|E|}{|T|}\right)^2 \int_T \omega.
\]
Finally, notice that the supports of the nodal basis functions $\{\phi_i\}_{i=1}^{\N(\T)}$ have a finite
overlap which is independent of the refinement level, \ie for every $i = 1,\ldots, \N(\T)$, the number
$
  n(i) = \# \left\{ j : \supp\phi_i \cap \supp\phi_j \neq \emptyset \right\}
$
is uniformly bounded. We arrive at \eqref{eq:weighted_stability} summing over all the elements $T \in \T$.
\end{proof}

With the aid of the stability of the nodal decomposition, we now show a weighted inverse inequality.

\begin{lemma}[weighted inverse inequality]
\label{le:inv_est}
Let $\T \in \Tr$ be a quasi-uniform mesh, and let
$T \in \T$ and $v \in \V(\T)$. Then, we have the following inverse inequality
\begin{equation}
\label{inv_est}
  \| \nabla v \|_{L^2(\omega,T)} \lesssim h_{\T}^{-1} \| v \|_{L^2(\omega,T)}.
\end{equation}
\end{lemma}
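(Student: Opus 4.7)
The plan is to reduce the proof to the element-wise stability of the nodal decomposition already established in the proof of Lemma~\ref{lem:microStab}. Because the statement is local (on a single element $T$) and because differentiation of a finite element function on $T$ only picks up a factor $h_T^{-1}$, the weight essentially becomes a spectator.

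First, I would write $v|_T = \sum_{i \in \N(T)} V_i \phi_i$ in the local nodal basis and use the gradient bound $\| \nabla \phi_i \|_{L^\infty(T)} \lesssim h_T^{-1}$, which is standard and depends only on the shape regularity constant $\sigma$ (and, for $n$-rectangles, on the isoparametric map). Combined with the fact that only $\N_T = O(1)$ basis functions are supported on $T$, Cauchy--Schwarz in the nodal index yields pointwise on $T$
\begin{equation*}
  |\nabla v(x)|^2 \;\lesssim\; h_T^{-2} \sum_{i \in \N(T)} V_i^2.
\end{equation*}
Integrating against $\omega$ over $T$ gives
\begin{equation*}
  \| \nabla v \|_{L^2(\omega,T)}^2 \;\lesssim\; h_T^{-2} \Bigl( \int_T \omega \Bigr) \sum_{i \in \N(T)} V_i^2.
\end{equation*}

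Next, I would invoke the \emph{element-wise} lower bound in the stability of the nodal decomposition, namely
\begin{equation*}
  \Bigl( \int_T \omega \Bigr) \sum_{i \in \N(T)} V_i^2 \;\lesssim\; \| v \|_{L^2(\omega,T)}^2,
\end{equation*}
which was proven as an intermediate step in the proof of Lemma~\ref{lem:microStab} via the strong doubling property of $A_2$-weights (Proposition~\ref{pro:double}) applied to the set $E = \{x \in T : \phi_i^2 \geq 1/2\}$. Substituting this into the previous display produces
\begin{equation*}
  \| \nabla v \|_{L^2(\omega,T)}^2 \;\lesssim\; h_T^{-2} \, \| v \|_{L^2(\omega,T)}^2,
\end{equation*}
with a constant depending only on $\sigma$ and the $A_2$-constant $C_{2,\omega}$. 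Finally, quasi-uniformity gives $h_T \approx h_\T$, so $h_T^{-1} \lesssim h_\T^{-1}$, which yields \eqref{inv_est}.

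The only place where one must be careful is step two, because the weight $\omega$ is a generic $A_2$-weight and cannot simply be pulled through the integral. The strong doubling property is precisely the tool that relates $\int_E \omega$ to $\int_T \omega$ with a constant depending only on $C_{2,\omega}$ and the ratio $|E|/|T|$, which by shape regularity is a universal constant for $E = \{\phi_i^2 \geq 1/2\}$. So the hard part is really already absorbed into Lemma~\ref{lem:microStab}, and the proof here is essentially a quotation of its local version combined with the standard reference-element bound for $|\nabla \phi_i|$.
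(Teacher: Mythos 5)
Your proof is correct and follows essentially the same route as the paper: both rest on the reference-element gradient bound $|\nabla\phi_i|\lesssim h_T^{-1}$ together with the element-wise nodal stability $\bigl(\int_T\omega\bigr)\sum_i V_i^2 \lesssim \|v\|_{L^2(\omega,T)}^2$, which is exactly what the strong doubling argument inside Lemma~\ref{lem:microStab} provides. The only cosmetic difference is bookkeeping: the paper re-derives the needed inequality by chaining $\int_T\omega \lesssim C_{2,\omega}\int_T\omega\phi_i^2$ with the global estimate \eqref{eq:weighted_stability}, whereas you directly quote the local version established mid-proof of Lemma~\ref{lem:microStab}, which is a slightly cleaner presentation of the same idea.
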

\begin{proof}
Since $\T$ is quasi-uniform with meshsize $h_{\T}$,
we have $|\nabla \phi_i| \lesssim h_{\T}^{-1}$, and
\begin{equation*}
 \int_{T} \omega |\nabla v|^2 \lesssim h_{\T}^{-2} \sum_{i=1}^{\N_T}V_i^2\int_{T} \omega,
\end{equation*}
where, we have used the nodal decomposition of $v = \sum_{i=1}^{\N_T} V_i \phi_i$.
As in the proof of Lemma~\ref{lem:microStab}, the strong doubling property of $\omega$ yields
\[
  \int_T \omega \lesssim C_{2,\omega} \int_T \omega \phi_i^2
\]
so that we obtain
\[
  \int_{T} \omega |\nabla v|^2 \lesssim
  C_{2,\omega} h_{\T}^{-2} \sum_{i=1}^{\N} V_i^2 \int_{T} \omega \phi_i^2
  \lesssim C_{2,\omega} h_{\T}^{-2} \int_{T} \omega v^2,
\]
where, in the last step, we have used \eqref{eq:weighted_stability}. This concludes the proof.
\end{proof}


\subsection{Convergence analysis}
We now present a convergence analysis of Algorithm~\ref{alg:Vcycle} applied to solve the weighted discrete problem
\eqref{weighted_discrete_2} over quasi-uniform meshes and with standard pointwise Gauss-Seidel smoothers
\ie $\M_k = \N_k$ and $\mathcal{I}_{k,j} = \{ j \}$ for $j=1,\dots \N_k$.
The main ingredients in such analysis are the stability of the nodal decomposition
obtained in Lemma~\ref{lem:microStab}, the weighted inverse inequality of Lemma~\ref{le:inv_est},
and the properties of the quasi-interpolant introduced
in Section~\ref{sec:FEM_degenerate}. We follow \cite{MGaniso,XCN:09}.

\begin{theorem}[convergence of symmetric $\mathcal{V}$-cycle multigrid]
\label{TH:convergence_1}
Algorithm 1 with po\-int-wise Gauss-Seidel smoother is convergent with a contraction rate
\[
  \delta \leq 1 - \frac{1}{1+CJ},
\]
where $C$ is independent of the meshsize, and it
depends on the weight $\omega$ only through the constant
$C_{2,\omega}$ defined in \eqref{A_pclass}.
\end{theorem}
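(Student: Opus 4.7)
The argument combines the Xu--Zikatanov identity (Theorem~\ref{XZ}) applied to the pointwise multilevel decomposition \eqref{V=decomp} (here $\M_k=\N_k$ and $\calI_{k,j}=\{j\}$) with the weighted quasi-interpolation machinery of \S\ref{sub:interpolation_operator} and the weighted stability and inverse estimates of \S\ref{sec:MGQunif}. In view of \eqref{eq:erroroperator}, it suffices to prove $c_0\lesssim J$ with hidden constant depending on $\omega$ only through $C_{2,\omega}$.

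First I would build a stable multilevel decomposition. For $v\in\V_J$, set $\Pi_k:=\Pi_{\T_k}$, $\Pi_{-1}:=0$, and define $w_k := \Pi_k v-\Pi_{k-1}v\in\V_k$ for $0\le k\le J-1$, together with $w_J:=v-\Pi_{J-1}v\in\V_J$, so that $v=\sum_{k=0}^J w_k$. The weighted quasi-interpolation error estimate~\eqref{v - PivLp}, used in a telescoping fashion and together with the uniform refinement relation $h_{k-1}\eqsim h_k$, gives $\|w_k\|_{L^2(\omega,D)}\lesssim h_k\|\nabla v\|_{L^2(\omega,D)}$. The weighted inverse estimate~\eqref{inv_est} and the ellipticity~\eqref{weight_singular} then yield the level-wise energy bound $\|w_k\|_A\lesssim\|v\|_A$.

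Second, I would split each $w_k$ into its nodal components $\nu_{k,j}:=w_k(\ver_{k,j})\phi_{k,j}\in\V_{k,j}$. The weighted $L^2$ nodal stability of Lemma~\ref{lem:microStab}, coupled with the localized equivalence $\|\phi_{k,j}\|_A^2\eqsim h_k^{-2}\|\phi_{k,j}\|_{L^2(\omega,D)}^2$ (a consequence of Lemma~\ref{le:inv_est} and the strong doubling property of Proposition~\ref{pro:double}), gives
\[
\sum_{j=1}^{\N_k}\|\nu_{k,j}\|_A^2\;\eqsim\;h_k^{-2}\sum_{j=1}^{\N_k}\|\nu_{k,j}\|_{L^2(\omega,D)}^2\;\eqsim\;h_k^{-2}\|w_k\|_{L^2(\omega,D)}^2\;\lesssim\;\|v\|_A^2,
\]
and summing over the $J+1$ levels produces the stable decomposition estimate $\sum_{k,j}\|\nu_{k,j}\|_A^2\lesssim J\|v\|_A^2$. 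To close the argument, note that $P_{k,j}$ is the $a$-orthogonal projection onto $\mathrm{span}\{\phi_{k,j}\}$, so $\|P_{k,j}u\|_A^2=a(u,\phi_{k,j})^2/a(\phi_{k,j},\phi_{k,j})$; Cauchy--Schwarz and the level-wise finite overlap of $\supp\phi_{k,j}$ yield $\sum_{j=1}^{\N_k}\|P_{k,j}u\|_A^2\lesssim\|u\|_A^2$ for every $u\in\V_J$. Splitting the inner sum in~\eqref{c_0} into its intra-level and inter-level pieces and applying a standard strengthened Cauchy--Schwarz between hierarchical levels then produces
\[
\sum_{(k,j)}\Bigl\|P_{k,j}\sum_{(\ell,i)>(k,j)}\nu_{\ell,i}\Bigr\|_A^2\;\lesssim\;\sum_{k,j}\|\nu_{k,j}\|_A^2\;\lesssim\;J\|v\|_A^2,
\]
i.e.\ $c_0\lesssim J$, and Theorem~\ref{XZ} delivers the claimed rate $\delta\leq 1-1/(1+CJ)$.

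The main obstacle is the weighted approximation/stability step: passing from Proposition~\ref{TH:v - PivL2} at two consecutive levels to $\|w_k\|_{L^2(\omega,D)}\lesssim h_k\|\nabla v\|_{L^2(\omega,D)}$ and coupling it with the weighted inverse and nodal-stability estimates relies on the doubling property of $A_2$-weights at every step, which is precisely why the constant tracks only $C_{2,\omega}$. Avoiding the logarithmic factor $J\eqsim|\log h_J|$ would seem to require a weighted elliptic regularity hypothesis (as in the classical BPX analysis with full $H^2$-regularity) and cannot be eliminated by the present techniques---hence the word ``nearly'' in the statement.
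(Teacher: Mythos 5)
Your proposal correctly identifies the XZ identity as the key tool, the telescopic quasi-interpolant decomposition $w_k = (\Pi_{\T_k}-\Pi_{\T_{k-1}})v$, the nodal micro-decomposition, and the relevant weighted estimates (Lemma~\ref{lem:microStab}, Lemma~\ref{le:inv_est}, Propositions~\ref{TH:v - PivL2} and~\ref{TH:v - PivH1}), and you derive the level-wise stable decomposition bound $\sum_{k,j}\|\nu_{k,j}\|_A^2 \lesssim J\|v\|_A^2$ correctly. The locality observation $\sum_j\|P_{k,j}u\|_A^2\lesssim\|u\|_A^2$ is also right and is in fact what the paper uses.

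However, there is a genuine gap in your final step, where you invoke ``a standard strengthened Cauchy--Schwarz between hierarchical levels'' to bound the inter-level interactions in $c_0$. This is not established in the $A_2$-weighted setting, and it is not a small omission: the usual proofs of SCSI for $P_1$ elements integrate by parts using $\Delta w_k = 0$ on each coarse element, but here $\DIV(\omega\nabla w_k)$ is not zero for a general $A_2$-weight $\omega$, so the standard argument does not transfer. The paper's proof deliberately avoids SCSI by using the telescoping identity
\[
\sum_{(l,j)\succ(k,i)}\nu_{l,j} = \big(v - \Pi_{\T_k}v\big) + \sum_{j>i}\nu_{k,j},
\]
so that the entire inter-level contribution collapses to the \emph{single} function $v-\Pi_{\T_k}v$. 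Combined with the locality of $P_{k,i}$ (which restricts the norm to $D_{k,i}=\supp\phi_{k,i}$), finite overlap, and the $H^1$-\emph{stability} estimate $\|\nabla(v-\Pi_{\T_k}v)\|_{L^2(\omega,D)}\lesssim\|\nabla v\|_{L^2(\omega,D)}$ from Proposition~\ref{TH:v - PivH1} (not the approximation estimate you used, which is needed only for the intra-level piece), one obtains $\sum_i\|\nabla P_{k,i}V_{k,i}\|^2 \lesssim \|\nabla v\|^2$ directly. This replaces SCSI entirely. To repair your argument, replace the SCSI appeal by this telescoping plus $H^1$-stability step; otherwise you would first have to prove a weighted SCSI, which would be an independent result not available from the tools developed in the paper.
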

\begin{proof}
By the XZ identity stated in Theorem~\ref{XZ}, we only need to estimate
\begin{equation}
\label{XZidentity}
  c_0 = \sup_{\| v \|_{H_0^1(\omega, D)}=1 }
    \inf_{\sum_{k=0}^J \sum_{i=1}^{\N_k} \! v_{k,i} = v}
    \sum_{k=0}^J \sum_{i=1}^{\N_k}
    \left\|\nabla\left( P_{k,i} \sum_{(l,j) \succ (k,i)} v_{l,j} \right) \right\|_{L^2(\omega,D)}^2  \! \!,
\end{equation}
where $\succ$ stands for the so called \emph{lexicographic ordering}, \ie
\begin{equation*}
  (l,j) \succ (k,i) \Leftrightarrow
    \begin{dcases}
      l >k, \\
      l = k &  \& \ \ \ j>i.
    \end{dcases}
\end{equation*}

We recall that $k=0,\cdots J$, $j=1,\dots,\N_k$ and the operator
$P_{k,i}: \V_k \rightarrow \V_{k,i}$ is the projection with respect to the bilinear form $a$.
For $k=0,\cdots J$ we denote by $\Pi_{\T_k}$ the quasi-interpolation operator defined in
\S\ref{sub:interpolation_operator} over the mesh $\T_k$. Next,
we introduce the telescopic multilevel decomposition
\begin{equation}
\label{mdecomposition1}
v = \sum_{k=0}^J v_k, \qquad v_k = (\Pi_{\T_k} - \Pi_{\T_{k-1}})v, \qquad \Pi_{\T_{-1}} v := 0,
\end{equation}
along with the nodal decomposition
\[
  v_k = \sum_{i=1}^{\N_{\,k}} v_{k,i},
\]
for each level $k$. Consequently, the right hand side of \eqref{XZidentity} can be rewritten by
using the telescopic multilevel decomposition \eqref{mdecomposition1} as follows:
\begin{align*}
 V_{k,i}:= \sum_{(l,j) \succ (k,i)} v_{l,j} &= \sum_{l=k+1}^{J} \sum_{j=1}^{\N_k} v_{l,j}
    + \sum_{j=i+1}^{\N_k} v_{k,j} \\
  & = \sum_{l=k+1}^{J} v_{l} + \sum_{j= i +1}^{\N_k} v_{k,j}
  = v - \Pi_{\T_k} v + \sum_{j=i+1}^{\N_k} v_{k,j}.
\end{align*}
Therefore, we have
\begin{align*}
  \left\|\nabla P_{k,i}  V_{k,i}  \right\|_{L^2(\omega,D)}^2 &\lesssim
  \left\|\nabla P_{k,i} ( v - \Pi_{\T_k} v) \right\|_{L^2(\omega,D)}^2
  + \left\| \nabla  P_{k,i} \sum_{j=i+1}^{\N_k} v_{k,j} \right\|_{L^2(\omega,D)}^2 \\
  & \lesssim
  \left\|\nabla ( v - \Pi_{\T_k} v) \right\|_{L^2(\omega,D_{k,i})}^2
  +\sum_{
      \begin{subarray}{c}
        j=i+1 \\
        D_{k,i} \cap D_{k,j} \neq \emptyset
      \end{subarray}
    }^{\N_k}
    \| \nabla v_{k,j} \|_{L^2(\omega,D)}^2,
\end{align*}
where $D_{k,i} = \supp \phi_{k,i}$.
Adding over $i=1,\ldots,\N_k$,
and using the finite overlapping property
of the sets $D_{k,i}$, yields
\[
   \sum_{i=1}^{\N_k}  \sum_{
      \begin{subarray}{c}
        j=i+1 \\
        D_{k,i} \cap D_{k,j} \neq \emptyset
      \end{subarray}
    }^{\N_k}
    \| \nabla v_{k,j} \|_{L^2(\omega,D)}^2 \lesssim  \sum_{i=1}^{\N_k} \| \nabla v_{k,i} \|_{L^2(\omega,D)}^2,
\]
whence, the weighted inverse inequality \eqref{inv_est} gives
\[
  \sum_{i=1}^{\N_k} \left\|\nabla P_{k,i} V_{k,i}  \right\|_{L^2(\omega,D)}^2
   \lesssim \|\nabla ( v - \Pi_{\T_k} v) \|_{L^2(\omega,D)}^2
  + \sum_{i=1}^{\N_k} h_k^{-2} \| v_{k,i} \|_{L^2(\omega,D)}^2.
\]
We resort to the stability of the operator $\Pi_{\T_k}$, Proposition~\ref{TH:v - PivH1},
and the stability of the micro decomposition, Lemma~\ref{lem:microStab}, to arrive at
\[
  \sum_{i=1}^{\N_k} \left\|\nabla P_{k,i} V_{k,i}  \right\|_{L^2(\omega,D)}^2
  \lesssim \|\nabla v\|_{L^2(\omega,D)}^2 + h_k^{-2} \| v_k\|_{L^2(\omega,D)}^2.
\]
Since $v_k = (\Pi_{\T_k} - \Pi_{\T_{k-1}})v$, we utilize the approximation properties of $\Pi_{\T_k}$,
given in Proposition~\ref{TH:v - PivL2}, to deduce
\[
  \|  v_k \|_{L^2(\omega,D)} \leq \| v - \Pi_{\T_k} v \|_{L^2(\omega,D)}
    + \| v - \Pi_{\T_{k-1}} v \|_{L^2(\omega,D)} \lesssim h_k \| \nabla v \|_{L^2(\omega,D)}.
\]
This implies
$
\sum_{i=1}^{\N_k} \left\|\nabla P_{k,i} V_{k,i}  \right\|_{L^2(\omega,D)}^2 \lesssim \| \nabla v \|^2_{L^2(\omega,D)},
$
and adding over $k$ from $0$ to $J$ yields $c_0 \lesssim J$,
which completes the proof.
\end{proof}

\section{A multigrid method for the fractional Laplace operator on anisotropic meshes}
\label{sec:MGLaps}
As we explained in \S~\ref{sub:anisoFE},
the regularity estimate \eqref{reginy} implies the necessity
of graded meshes in the extended variable $y$. This allows us
to recover an almost-optimal error estimate for the finite element approximation
of problem \eqref{alpha_harm_intro} \cite[Theorem 5.4]{NOS}.
In fact, finite elements on quasi-uniform meshes have \emph{poor}
approximation properties for small values of the parameter $s$. The isotropic error estimates
of \cite[Theorem 5.1]{NOS} are not optimal, which makes anisotropic estimates essential. For this reason,
in this section we develop a multilevel theory for problem \eqref{alpha_harm_intro} having in mind anisotropic
partitions in the extended variable $y$ and the multilevel setting described in
Section~\ref{sec:MG_degenerate} for the
nonuniformly elliptic equation \eqref{weighted_second}.
We shall obtain nearly uniform convergence of a $\mathcal{V}$-cycle multilevel
method for the problem \eqref{alpha_harm_intro}
without any regularity assumptions. We consider line Gauss-Seidel smoothers.
The analysis is an adaptation of the results presented in ~\cite{MGaniso} for anisotropic elliptic equations, and it is
again based on the XZ identity \cite{XuZ02}.

\subsection{A multigrid algorithm with line smoothers}
As W.~Hackbusch rightfully explains \cite{Hackbusch:89}: ``\emph{the multigrid method cannot be understood as a fixed
algorithm. Usually, the components of the multigrid iteration should be adapted to the given problem, [...]
being the smoothing iteration the most delicate part of the multigrid process}''.

The success of multigrid methods for uniformly elliptic operators
is due to the fact that the smoothers are effective in reducing the nonsmooth (high frequency) components
of the error and the coarse grid corrections are effective in reducing the smooth (low frequency) components.
However, the effectiveness of both strategies depends crucially on several factors such as the anisotropy of the mesh.
A key ingredient in the design and analysis of a multigrid method
on anisotropic meshes is the use of the so called line smoothers; see \cite{AS:02,BZ:01,Hackbusch:89,S:93}.

Intuitively, when solving the $\alpha$-harmonic extension \eqref{alpha_harm_intro} on graded meshes,
the approximation from the coarse grid is dominated by the larger meshsize
in the $x$-direction and thus the coarse grid correction cannot capture the
smaller scale in the $y$-direction. One possible solution is
the use of semi-coarsening, \ie coarsening only the
$y$-direction until the meshsizes in both directions are comparable.
Another solution is the use of line smoothing, \ie solving
sub-problems restricted to one vertical line.
We shall use the latter approach which is relatively easy to implement for tensor-product meshes.

Let us describe the decomposition of $\V_J = \V(\T_{\Y_J})$ that we shall use. To do so, we follow the notation of
\S\ref{sub:multilevel}. We set $\M_k$ to be the number of interior nodes of $\T_{\Omega,k}$
and define, for $j=1,\ldots,\M_k$, the set $\calI_{k,j}$ as the collection of indices for the vertices
that lie on the line $\{\vero_j'\} \times (0,\Y)$ at the level $k$. The decomposition
is then given by \eqref{V=decomp}. This decomposition is also stable, which allows us to obtain
the appropriate anisotropic inverse inequalities; see Lemma~\ref{lem:anisoInverse} below.

Owing to the nature of the decomposition, the smoother requires the evaluation of
$A_{k,j}^{-1}$ which corresponds to the action of the operator over a vertical line. This can be efficiently realized
since the corresponding matrix is tri-diagonal.

\begin{lemma}[nodal stability and anisotropic inverse inequalities]
\label{lem:anisoInverse}
Let $\T_\Y$ be a gra\-ded tensor product grid, which is quasi-uniform in $\Omega$
and graded in the extended variable so that \eqref{graded_mesh} holds.
If  $v \in \V(\T_\Y)$ can be decomposed as $v = \sum_{j=1}^{\M_J} v_j$, then
\begin{equation}
\label{eq:stabaniso}
  \sum_{j=1}^{\M_J} \| v_j \|_{L^2(y^\alpha, \C_\Y)}^2 \lesssim
  \left\| v \right\|_{L^2(y^\alpha, \C_\Y)}^2 \lesssim
  \sum_{j=1}^{\M_J} \| v_j \|_{L^2(y^\alpha, \C_\Y)}^2.
\end{equation}
Moreover, we have the following inverse inequalities
\begin{equation}
\label{eq:invaniso}
  \| \nabla_{x'} v \|_{L^2(y^\alpha,T)} \lesssim h_K^{-1} \| v \|_{L^2(y^\alpha, T)}, \qquad
  \| \partial_y v \|_{L^2(y^\alpha,T)} \lesssim h_I^{-1} \| v \|_{L^2(y^\alpha, T)},
\end{equation}
where $T = K \times I$ is a generic element of $\T_{\Y}$.
\end{lemma}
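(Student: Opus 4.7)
The plan is to adapt the arguments of Lemmas~\ref{lem:microStab} and~\ref{le:inv_est} to the anisotropic tensor-product setting, exploiting two structural features of the problem: the weight $y^{\alpha}$ belongs to $A_2(\R^{n+1})$, so the strong doubling property of Proposition~\ref{pro:double} is available; and on each prism $T=K\times I$ the basis functions factor as $\phi_{J,i}(x',y)=\phi^{x'}_{K,i}(x')\,\phi^{y}_{I,i}(y)$, so weighted local integrals split by Fubini into a purely geometric factor in $x'$ and a 1D weighted factor in $y$.

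The first step I would establish is a local nodal stability on each element $T=K\times I$:
\[
\|v\|^{2}_{L^{2}(y^{\alpha},T)}\eqsim \sum_{i\in \N(T)}V_i^{2}\,\|\phi_{J,i}\|^{2}_{L^{2}(y^{\alpha},T)}\eqsim \Bigl(\int_T y^{\alpha}\Bigr)\sum_{i\in \N(T)}V_i^{2}.
\]
By Fubini the local weighted mass matrix on $T$ equals the Kronecker product of the unweighted $x'$-mass matrix on $K$ and the 1D $y^{\alpha}$-weighted mass matrix on $I$; both are spectrally equivalent to their diagonals, the latter via Lemma~\ref{lem:microStab} applied in $\R$ to the $A_2(\R)$-weight $y^{\alpha}$. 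The second equivalence then follows exactly as in Lemma~\ref{lem:microStab}: $\int_T y^{\alpha}\phi_{J,i}^{2}\leq \int_T y^{\alpha}$ is trivial from $0\leq \phi_{J,i}\leq 1$, while the reverse bound is obtained by applying the strong doubling property to the super-level set $E_i=\{\phi_{J,i}^{2}\geq 1/2\}\subset T$, which satisfies $|E_i|\gtrsim |T|$.

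From this local equivalence the line stability \eqref{eq:stabaniso} follows by grouping nodes according to the vertical line they sit on. The upper bound $\|v\|^{2}_{L^{2}(y^{\alpha},\C_\Y)}\lesssim \sum_j \|v_j\|^{2}_{L^{2}(y^{\alpha},\C_\Y)}$ is a Cauchy--Schwarz argument exploiting the finite overlap of the supports of the $v_j$, which are vertical slabs of horizontal thickness $\lesssim h_K$ around each interior vertex line; shape regularity of $\T_{\Omega,J}$ bounds this overlap by a constant depending only on the dimension and on $\sigma$. For the reverse bound, the local nodal equivalence applied element-wise gives $\sum_j \|v_j\|^{2}_{L^{2}(y^{\alpha},T)}\eqsim (\int_T y^{\alpha})\sum_{i\in \N(T)}V_i^{2}\eqsim \|v\|^{2}_{L^{2}(y^{\alpha},T)}$, and summing over $T\in \T_\Y$ concludes.

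The anisotropic inverse inequalities \eqref{eq:invaniso} then follow by the template of Lemma~\ref{le:inv_est}: the tensor-product factorization yields $|\nabla_{x'}\phi_{J,i}|\lesssim h_K^{-1}$ and $|\partial_y \phi_{J,i}|\lesssim h_I^{-1}$ pointwise on $T$, whence
\[
\int_T y^{\alpha}|\nabla_{x'}v|^{2}\lesssim h_K^{-2}\sum_{i\in \N(T)}V_i^{2}\int_T y^{\alpha}\lesssim h_K^{-2}\sum_{i\in \N(T)}V_i^{2}\int_T y^{\alpha}\phi_{J,i}^{2}\lesssim h_K^{-2}\|v\|^{2}_{L^{2}(y^{\alpha},T)},
\]
using strong doubling in the second step and the local nodal equivalence in the third; the bound for $\partial_y$ is entirely analogous with $h_I^{-1}$ in place of $h_K^{-1}$. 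The main obstacle is that the elements of the graded mesh have unbounded aspect ratio as $y\downarrow 0$, so a single uniform scaling to a reference configuration is unavailable; the Fubini factorization above is precisely what circumvents this, since it lets us argue element by element with constants depending only on $C_{2,y^{\alpha}}$ and on the quasi-uniformity of $\T_{\Omega,J}$, and never on the ratio $h_I/h_K$.
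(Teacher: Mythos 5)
Your proof is correct and, if anything, more complete than the one in the paper, which only states that \eqref{eq:stabaniso} ``follows along the same lines of Lemma~\ref{lem:microStab}'' and derives \eqref{eq:invaniso} from the elementary gradient bounds on the basis functions ``inspired in Lemma~\ref{le:inv_est}.'' The substantive difference is exactly the point you flag at the end: the proof of Lemma~\ref{lem:microStab} applies the strong doubling property of Proposition~\ref{pro:double} with the element $T$ playing the role of the ball $B$, which is legitimate only because those elements are shape regular; on the prisms $T = K\times I$ of the graded mesh the aspect ratio $h_I/h_K$ degenerates near $y=0$, so a naive repetition of that argument would produce a constant scaling like $(|B|/|T|)^2$ with $B$ the circumscribed ball, which blows up. Your Fubini/Kronecker factorization of the local weighted mass matrix into the unweighted mass matrix on $K$ times the $1$D $y^\alpha$-weighted mass matrix on $I$ replaces the $(n+1)$-dimensional doubling estimate by a $1$D one on the interval $I$ (which \emph{is} a ball in $\R$) together with a purely geometric estimate on the shape-regular base $K$, and this is precisely what makes the constants uniform in the aspect ratio. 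One minor point worth tidying if you were to turn this into a polished write-up: the super-level set $E_i = \{\phi_{J,i}^2 \geq 1/2\}$ is not itself a Cartesian product, so to run the $1$D doubling argument cleanly you should first replace it by the smaller product set $\{\phi^{x'}_{K,i} \geq 2^{-1/4}\}\times\{\phi^y_{I,i}\geq 2^{-1/4}\}\subset E_i$, whose factors still have measure $\gtrsim |K|$ and $\gtrsim |I|$ respectively. Also, rather than invoking Lemma~\ref{lem:microStab} ``in $\R$'' as a black box, it is cleaner to apply the reference-interval equivalence plus the $1$D strong doubling directly to the single interval $I$. These are cosmetic; the route you take is the right way to make the paper's terse argument rigorous on the anisotropic mesh.
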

\begin{proof}
The nodal stability \eqref{eq:stabaniso} follows along the same lines of Lemma~\ref{lem:microStab}
upon realizing that the functions $v_j = v_j(x',y)$ are defined on the vertical lines
$(\vero_j',y)$ with $y \in (0,\Y)$ and the index $j$ corresponds to a nodal decomposition in $\Omega$. Moreover,
noticing that $| \nabla_{x'} \phi_i | \lesssim h_K^{-1}$ and
$|\partial_y \phi_i | \lesssim h_I^{-1} $, we derive \eqref{eq:invaniso} inspired in Lemma~\ref{le:inv_est}.
\end{proof}

We examine Algorithm~\ref{alg:Vcycle} applied to the decomposition
\eqref{V=decomp} with exact sub-solvers on $\V_{k,j}$, \ie with line smoothers;
see~\cite[\S{III.12}]{MR1804746} and~\cite{MGaniso}.
A key observation in favor of subspaces $\{ \V_{k,j} \}_{j=1}^{\M_k}$ follows.
\begin{lemma}[nodal stability of $y$-derivatives]\label{lem:anisoy}
Under the same assumptions of Le\-mma~\ref{lem:anisoInverse} we have
\begin{equation}
\label{eq:anisoy}
  \sum_{j=1}^{\M_J} \| \partial_y v_j \|_{L^2(y^\alpha, \C_\Y)}^2 \lesssim
  \left\| \partial_y v \right\|_{L^2(y^\alpha, \C_\Y)}^2 \lesssim
  \sum_{j=1}^{\M_J} \| \partial_y v_j \|_{L^2(y^\alpha, \C_\Y)}^2.
\end{equation}
\end{lemma}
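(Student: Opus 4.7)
The plan is to reduce the weighted anisotropic statement to the unweighted nodal stability on $\Omega$ by exploiting two structural features: the tensor-product nature of $\T_\Y$ and the fact that the weight $y^\alpha$ depends only on the extended variable.

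First, I would write $\phi_j(x')$ for the canonical nodal basis of $\V(\T_{\Omega,J})$. By the construction of the decomposition in Section~\ref{sec:MGLaps}, each $v_j$ collects the basis functions whose nodes lie on the vertical line $\{\vero_j'\}\times(0,\Y)$, and the tensor-product structure of the mesh gives the factorization
\[
v_j(x',y) = V_j(y)\,\phi_j(x'), \qquad V_j(y) := v(\vero_j',y).
\]
Differentiation in $y$ preserves the factor $\phi_j(x')$, so $\partial_y v_j(x',y) = V_j'(y)\,\phi_j(x')$, and summing in $j$ shows that $\partial_y v(x',y) = \sum_{j=1}^{\M_J} V_j'(y)\,\phi_j(x')$ is precisely the nodal expansion of $\partial_y v(\cdot,y)\in\V(\T_{\Omega,J})$ at a.e. fixed height $y\in(0,\Y)$.

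Next, since $y^\alpha$ is independent of $x'$, Fubini's theorem yields
\[
\| \partial_y v \|_{L^2(y^\alpha,\C_\Y)}^2 = \int_0^\Y y^\alpha \, \| \partial_y v(\cdot,y) \|_{L^2(\Omega)}^2 \, dy,
\]
and analogously for each $\partial_y v_j$. For a.e.\ $y$ I would then apply Lemma~\ref{lem:microStab} to $\partial_y v(\cdot,y)\in\V(\T_{\Omega,J})$ with the trivial weight $\omega\equiv 1$ (which lies in $A_2(\R^n)$ with constant $1$, so the hidden constants only see the shape regularity of $\T_{\Omega,J}$). This produces the pointwise-in-$y$ equivalence
\[
\sum_{j=1}^{\M_J} \| \partial_y v_j(\cdot,y) \|_{L^2(\Omega)}^2 \lesssim \| \partial_y v(\cdot,y) \|_{L^2(\Omega)}^2 \lesssim \sum_{j=1}^{\M_J} \| \partial_y v_j(\cdot,y) \|_{L^2(\Omega)}^2,
\]
with constants independent of $y$ and of the refinement level. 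Multiplying by $y^\alpha$ and integrating over $(0,\Y)$ delivers both inequalities of \eqref{eq:anisoy}.

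There is no genuine obstacle; the only point that demands care, and the real engine of the argument, is the observation that the Muckenhoupt weight depends solely on the extended variable, which is what allows the unweighted nodal stability in $x'$ to be invoked pointwise inside the $y$-integral. For a weight that also varied in $x'$ the $A_2$-constants would fluctuate with $y$ and this separation of variables would break down. In passing, the same scheme (combined with Lemma~\ref{le:inv_est} applied to $\partial_y v(\cdot,y)$) reproduces the first inverse estimate of \eqref{eq:invaniso}, whereas the second inverse estimate requires the genuinely weighted one-dimensional nodal stability along each vertical line and is the place where the $A_2$ hypothesis on $y^\alpha$ is used nontrivially.
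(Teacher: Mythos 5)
Your proof is correct and, in substance, identical to the paper's, which disposes of this lemma in a single line: ``proceed as in Lemma~\ref{lem:microStab} with $v$ replaced by $\partial_y v = \sum_{j} \partial_y v_j$.'' What you have done is unfold that one line into an explicit argument: the tensor-product factorization $v_j(x',y) = V_j(y)\phi_j(x')$, the observation that $\partial_y$ preserves the $x'$-factor so that $\partial_y v(\cdot,y) = \sum_j V_j'(y)\phi_j$ is the nodal decomposition in $\V(\T_{\Omega,J})$ at each fixed height, and then Fubini to reduce the weighted cylinder norm to a $y$-integral of unweighted slice norms, to which the (unweighted) nodal stability of Lemma~\ref{lem:microStab} applies with constants uniform in $y$. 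This is precisely the content hidden behind the paper's cross-reference, and your explicit form has the virtue of isolating the structural ingredient that makes the reduction work --- the weight $y^\alpha$ being a function of the extended variable alone --- which the paper leaves implicit. Your closing remark, that \eqref{eq:invaniso} bifurcates into an unweighted part in $x'$ and a genuinely $A_2$-weighted one-dimensional part in $y$, is also an accurate reading of the structure.
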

\begin{proof}
We just proceed as in Lemma \eqref{lem:microStab} with $v$ replaced by $\partial_y v = \sum_{j=1}^{\M_J} \partial_y v_j$.
\end{proof}

Exploiting Theorem~\ref{XZ}, the properties of the quasi-interpolation operator $\Pi_{\T_k}$ defined in
\S\ref{sub:interpolation_operator}, and Lemmas~\ref{lem:anisoInverse} and \ref{lem:anisoy},
we obtain the nearly uniform convergence of the symmetric
$\mathcal V$-cycle multigrid method. We follow \cite{MGaniso,XCN:09}.

\begin{theorem}[convergence of multigrid methods with line smoothers]
\label{TH:convergence_2}
The symmetric $\mathcal{V}$-cycle multigrid method with line smoothing converges with a contraction rate
\[
  \delta \leq 1 - \frac{1}{1+CJ},
\]
where $C$ is independent of the number of degrees of freedom. The constant $C$
depends on the weight $y^{\alpha}$ only through the constant $C_{2,y^{\alpha}}$, and on $s$ like
$C \approx \gamma $, where $\gamma$ is the parameter that defines the graded mesh \eqref{graded_mesh}.
\end{theorem}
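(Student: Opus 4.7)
The plan is to mirror the proof of Theorem~\ref{TH:convergence_1}, combining the XZ identity (Theorem~\ref{XZ}) with an anisotropic multilevel analysis tailored to the line decomposition \eqref{V=decomp}. It suffices to estimate
\begin{equation*}
c_0 = \sup_{\|\nabla v\|_{L^2(y^\alpha,\C_\Y)}=1} \ \inf_{\sum v_{k,j}=v} \ \sum_{k=0}^J \sum_{j=1}^{\M_k} \| \nabla P_{k,j} V_{k,j} \|_{L^2(y^\alpha,\C_\Y)}^2,
\end{equation*}
where $V_{k,j}:=\sum_{(l,i)\succ(k,j)} v_{l,i}$ and $\succ$ is the lexicographic order. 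I would use the telescopic splitting $v=\sum_{k=0}^J v_k$ with $v_k=(\Pi_{\T_k}-\Pi_{\T_{k-1}})v$ and $\Pi_{\T_{-1}}v:=0$, and then decompose each $v_k$ along vertical lines, $v_k=\sum_{j=1}^{\M_k} v_{k,j}$, where $v_{k,j}$ gathers the nodal values of $v_k$ on the column above the interior vertex $\ver_j'\in\T_{\Omega,k}$. Then $V_{k,j}=(v-\Pi_{\T_k}v)+\sum_{i>j} v_{k,i}$.

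Since $P_{k,j}$ is an $a$-orthogonal projection onto $\V_{k,j}$ and the supports of line functions on level $k$ have uniformly bounded overlap, the same manipulation as in the proof of Theorem~\ref{TH:convergence_1} yields
\begin{equation*}
\sum_{j=1}^{\M_k} \| \nabla P_{k,j} V_{k,j} \|_{L^2(y^\alpha,\C_\Y)}^2 \lesssim \| \nabla(v-\Pi_{\T_k}v) \|_{L^2(y^\alpha,\C_\Y)}^2 + \sum_{j=1}^{\M_k} \| \nabla v_{k,j} \|_{L^2(y^\alpha,\C_\Y)}^2,
\end{equation*}
and the first term is $\lesssim\|\nabla v\|_{L^2(y^\alpha,\C_\Y)}^2$ by the $H^1$-stability of $\Pi_{\T_k}$ in Proposition~\ref{TH:v - PivH1}. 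The crux is the second term, which I would split as $\nabla=(\nabla_{x'},\partial_y)$ and treat componentwise. For the tangential part, the first inequality in \eqref{eq:invaniso} (with $h_K\approx M_k^{-1}$ from quasi-uniformity of $\T_{\Omega,k}$) combined with the nodal stability \eqref{eq:stabaniso} gives $\sum_j\|\nabla_{x'}v_{k,j}\|_{L^2(y^\alpha,\C_\Y)}^2\lesssim M_k^2\|v_k\|_{L^2(y^\alpha,\C_\Y)}^2$, and the anisotropic $L^2$ interpolation error of Lemma~\ref{le:l2aniso}, applied to both $\Pi_{\T_k}$ and $\Pi_{\T_{k-1}}$, bounds this by $\|\nabla v\|_{L^2(y^\alpha,\C_\Y)}^2$.

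The main obstacle is the vertical component: an elementwise application of the second inequality in \eqref{eq:invaniso} would introduce factors $h_I^{-1}$ that blow up near $y=0$. This is precisely what Lemma~\ref{lem:anisoy} avoids, because it transfers the line decomposition directly to $\partial_y v_k$ without invoking any inverse estimate; together with the $H^1$-stability of $\Pi_{\T_k}$ one obtains $\sum_j\|\partial_y v_{k,j}\|_{L^2(y^\alpha,\C_\Y)}^2\lesssim\|\partial_y v_k\|_{L^2(y^\alpha,\C_\Y)}^2\lesssim\|\partial_y v\|_{L^2(y^\alpha,\C_\Y)}^2$. Adding over $k=0,\ldots,J$ yields $c_0\lesssim J$, from which Theorem~\ref{XZ} delivers the stated contraction rate. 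The dependence on $s$ is tracked through $\gamma>3/(2s)$: by \eqref{eq:mesh_rels} this parameter controls the ratio $h_I/h_K$ and the weak shape regularity constant $\sigma_\Y$, which are the only places where $s$ enters the hidden constants of Lemmas~\ref{le:l2aniso}, \ref{lem:anisoInverse} and \ref{lem:anisoy}.
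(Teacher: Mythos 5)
Your argument reproduces the paper's proof step for step: the XZ identity, the telescopic decomposition via the quasi-interpolants $\Pi_{\T_{\Y,k}}$, the line decomposition of each $v_k$, the tangential inverse estimate \eqref{eq:invaniso} combined with the nodal stability \eqref{eq:stabaniso} and the anisotropic $L^2$ interpolation bound, and Lemma~\ref{lem:anisoy} for the $\partial_y$-component. The only presentational difference is that the paper surfaces the $\gamma$-dependence more explicitly by bounding $\|v_k\|_{L^2(y^\alpha,\C_\Y)}\lesssim h_k'\|\nabla_{x'}v\|+h_k''\|\partial_y v\|$ and then invoking \eqref{eq:mesh_rels} to get $h_k''\lesssim\gamma h_k'$, rather than leaving it implicit in the hidden constant of Lemma~\ref{le:l2aniso}.
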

\begin{proof}
We use the XZ identity \eqref{XZ} and modify the arguments in the proof of Theorem~\ref{TH:convergence_1}.
We introduce the telescopic multilevel decomposition
\begin{equation}\label{mdecomposition}
  v = \sum_{k=0}^J v_k, \qquad v_k = (\Pi_{\T_{\Y,k}} - \Pi_{\T_{\Y,k-1}})v, \qquad \Pi_{\T_{\Y,-1}} v := 0,
\end{equation}
along with the line decomposition
\[
  v_k = \sum_{j=1}^{\M_k} v_{k,j}.
\]
Following the same arguments developed in the proof of
Theorem~\ref{TH:convergence_1},
and denoting $V_{k,i} = \sum_{(l,j) \succ (k,i)} v_{l,j}$, we arrive at the inequality
\begin{equation}
\label{aux_conv_1}
    \sum_{i=1}^{{\M_k}}
    \left\|\nabla P_{k,i} V_{k,i} \right\|_{L^2(y^{\alpha},\C_{\Y})}^2
    \lesssim
    \|\nabla ( v - \Pi_{\T_{\Y,k}} v) \|_{L^2(y^{\alpha},\C_{\Y})}^2
    +
    \sum_{j=1}^{\M_k} \| {\nabla v_{k,j}} \|_{L^2(y^{\alpha},\C_{\Y})}^2,
\end{equation}
where we have used the finite overlapping property of the sets $\mathcal{I}_{k,j}$; see \S~\ref{sub:multilevel}.
It remains to estimate both terms in \eqref{aux_conv_1}. The stability of the quasi-interpolant
$\Pi_{\T_{\Y,k}}$ stated in \eqref{dPIcontinuous} (see also \cite[Theorems 4.7 and 4.8]{NOS}
and \cite[Lemma 5.1]{NOS2}) yields
\begin{equation}
\label{aux_conv_*}
 \|\nabla ( v - \Pi_{\T_{\Y,k}} v) \|_{L^2(y^{\alpha},\C_{\Y})} \lesssim \|\nabla v \|_{L^2(y^{\alpha},\C_{\Y})}.
\end{equation}
To estimate the second term in \eqref{aux_conv_1} we begin by noticing that
\begin{equation}
\label{aux_conv_2}
  \sum_{j=1}^{\M_k} \| \nabla v_{k,j} \|_{L^2(y^{\alpha},\C_{\Y})}^2 =
  \sum_{j=1}^{\M_k} \| \nabla_{x'} v_{k,j} \|_{L^2(y^{\alpha},\C_{\Y})}^2
  +
  \sum_{j=1}^{\M_k} \| \partial_{y} v_{k,j} \|_{L^2(y^{\alpha},\C_{\Y})}^2.
\end{equation}
The first term is estimated via the first weighted inverse inequality \eqref{eq:invaniso}
and the stability of the nodal decomposition \eqref{eq:stabaniso}, that is
\begin{equation}
\label{aux_conv_3}
  \sum_{j=1}^{\M_k} \| \nabla_{x'} v_{k,j} \|_{L^2(y^{\alpha},\C_{\Y})}^2
  \lesssim \sum_{j=1}^{\M_k} {h'_k}^{-2}\|  v_{k,j} \|_{L^2(y^{\alpha},\C_{\Y})}^2
  \lesssim {h'_k}^{-2}\|  v_k \|_{L^2(y^{\alpha},\C_{\Y})}^2,
\end{equation}
where ${h'_k}$ denotes the meshsize in the $x'$ direction at level $k$.
The approximation property of $\Pi_{\T_{\Y,k}}$ stated in Lemma \ref{le:l2aniso}
(see also \cite[Theorem 5.7]{NOS2}) and the definition of $v_k$ yield
\begin{align*}
  \|  v_k \|_{L^2(y^{\alpha},\C_{\Y})}
 & \leq \| v - \Pi_{\T_{\Y,k}} v \|_{L^2(y^{\alpha},\C_{\Y})}
    + \| v - \Pi_{\T_{\Y,k-1}} v \|_{L^2(y^{\alpha},\C_{\Y})}\\
 & \lesssim
  {h'_k}\| \nabla_{x'}v \|_{L^2(y^{\alpha},\C_{\Y})}
  + {h''_k}\|  \partial_{y} v \|_{L^2(y^{\alpha},\C_{\Y})}
\end{align*}
where ${h''_k}$ denotes the \emph{maximal} meshsize in the $y$ direction at level $k$.
Using \eqref{eq:mesh_rels} we see that ${h''_k} \lesssim \gamma{h'_k}$, and replacing the estimate above
in \eqref{aux_conv_3}, we obtain
\begin{equation}
\label{aux_conv_4}
  \sum_{j=1}^{\M_k} \| \nabla_{x'} v_{k,j} \|_{L^2(y^{\alpha},\C_{\Y})}^2
  \lesssim \| \nabla v\|_{L^2(y^{\alpha},\C_{\Y})}^2,
\end{equation}
which bounds the first term in \eqref{aux_conv_2}.
To estimate the second term, we resort to Lemma~\ref{lem:anisoy}, namely
%
\begin{equation}
\label{aux_conv_5}
 \sum_{j=1}^{\M_k} \| \partial_{y} v_{k,j} \|_{L^2(y^{\alpha},\C_{\Y})}^2 \lesssim
  \| \partial_{y} v_{k} \|_{L^2(y^{\alpha},\C_{\Y})}^2.
\end{equation}
Finally, inequalities \eqref{aux_conv_4} and \eqref{aux_conv_5} allow us to conclude
\begin{equation*}
  \sum_{j=1}^{\M_k} \| \nabla v_{k,j} \|_{L^2(y^{\alpha},\C_{\Y})}^2
  \lesssim \| \nabla v \|_{L^2(y^{\alpha},\C_{\Y})}^2,
\end{equation*}
which together with  \eqref{aux_conv_*} yields the desired result after summing over $k$.
\end{proof}

\begin{remark}[dependence on $s$]
We point out the use of \eqref{eq:mesh_rels}, which in turn implies
${h''_k} \lesssim \gamma{h'_k}$, to derive \eqref{aux_conv_4}. This translates into $C \approx \gamma$
in Theorem~\ref{TH:convergence_2} and, since $\gamma > 3/(1-\alpha)=3/(2s)$, in deterioration of the contraction
factor as $s \downarrow 0$. We explore a remedy in \S~\ref{sub:meshmodif}.
\end{remark}

\section{Numerical Illustrations}
\label{sec:Numerics}

In this section, we present numerical experiments to support our theoretical findings.
We consider two examples:
\begin{enumerate}[(\thesection.1)]
 \item \quad $n=1$, \quad $\Omega = (0,1)$, \quad $u = \sin(3\pi x)$,
 \item \quad $n=2$, \quad $\Omega = (0,1)^2$, \quad $u = \sin (2\pi x_1)\sin(2\pi x_2)$,
\end{enumerate}
and $\Y=1$. The length $\Y$ of the cylinder in the
extended direction is fixed, as discussed in \cite{NOS}, so that it captures the exponential decay of the solution.
All of our algorithms are implemented based on the MATLAB$^\copyright$ software package {\it{i}}FEM~\cite{Chen.L2008c}.

\subsection{Multigrid with line smoothers on graded meshes}
\label{sub:NumExpMGaniso}

We partition $\Omega$ into a uniform grid of size $h_{\T_{\Omega}}$,
and we construct a graded mesh in the extended direction
using the mapping \eqref{graded_mesh} with parameter $\gamma = \frac3{2s} +0.1$ and $M = \frac1h$. Some sample meshes
are shown in Figure~\ref{fig:1ds08}. The mesh points are ordered column-wise so that the indices associated to vertical
lines are easily accessible. Starting from $h_{\T_0}=\frac14$ we obtain a sequence of meshes by halving
the meshsize of $\Omega$ and
applying the mapping \eqref{graded_mesh} in the extended direction with double number of mesh points.

\begin{figure}[ht]
\begin{center}
  \includegraphics[scale=0.4]{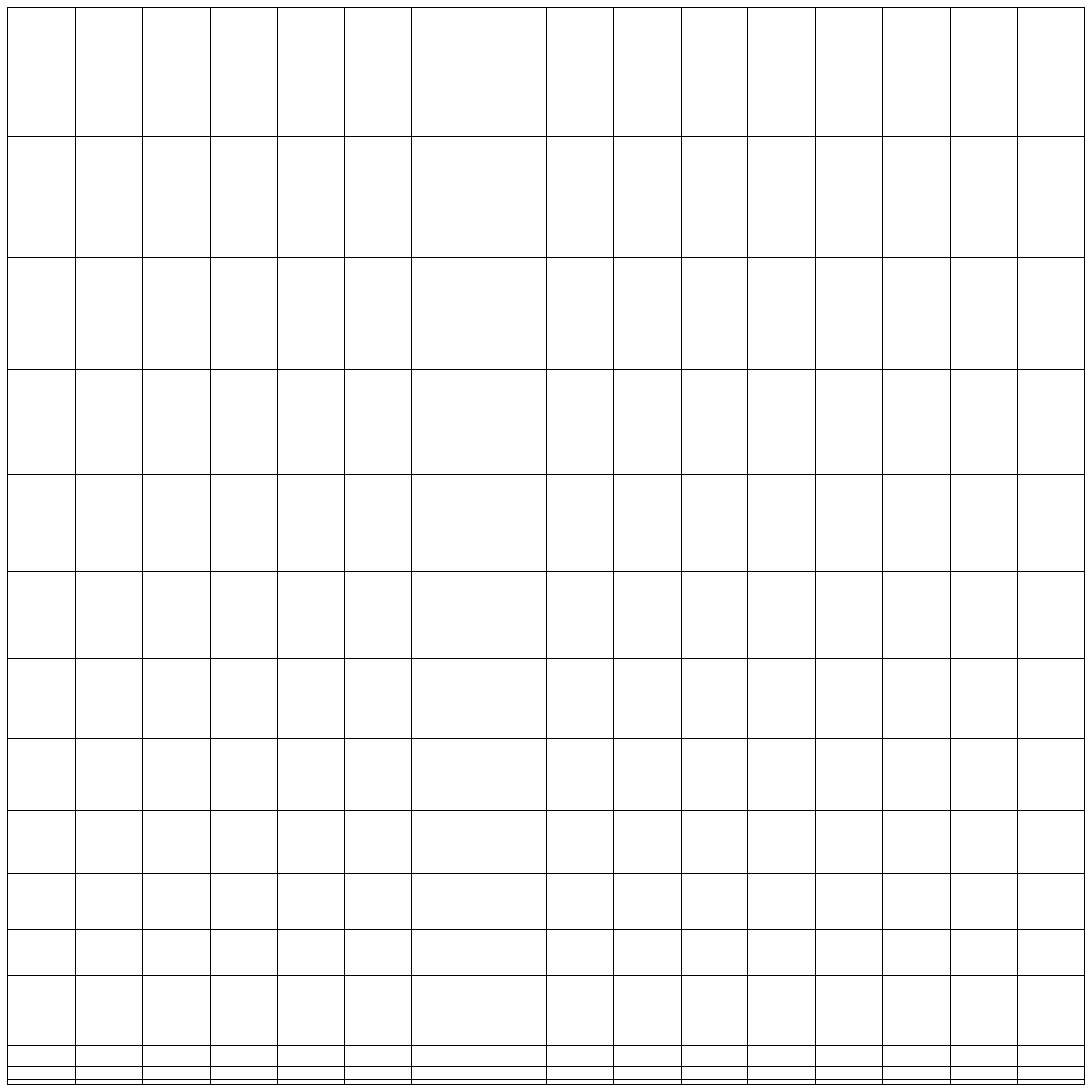}
  \hfil
  \includegraphics[scale=0.4]{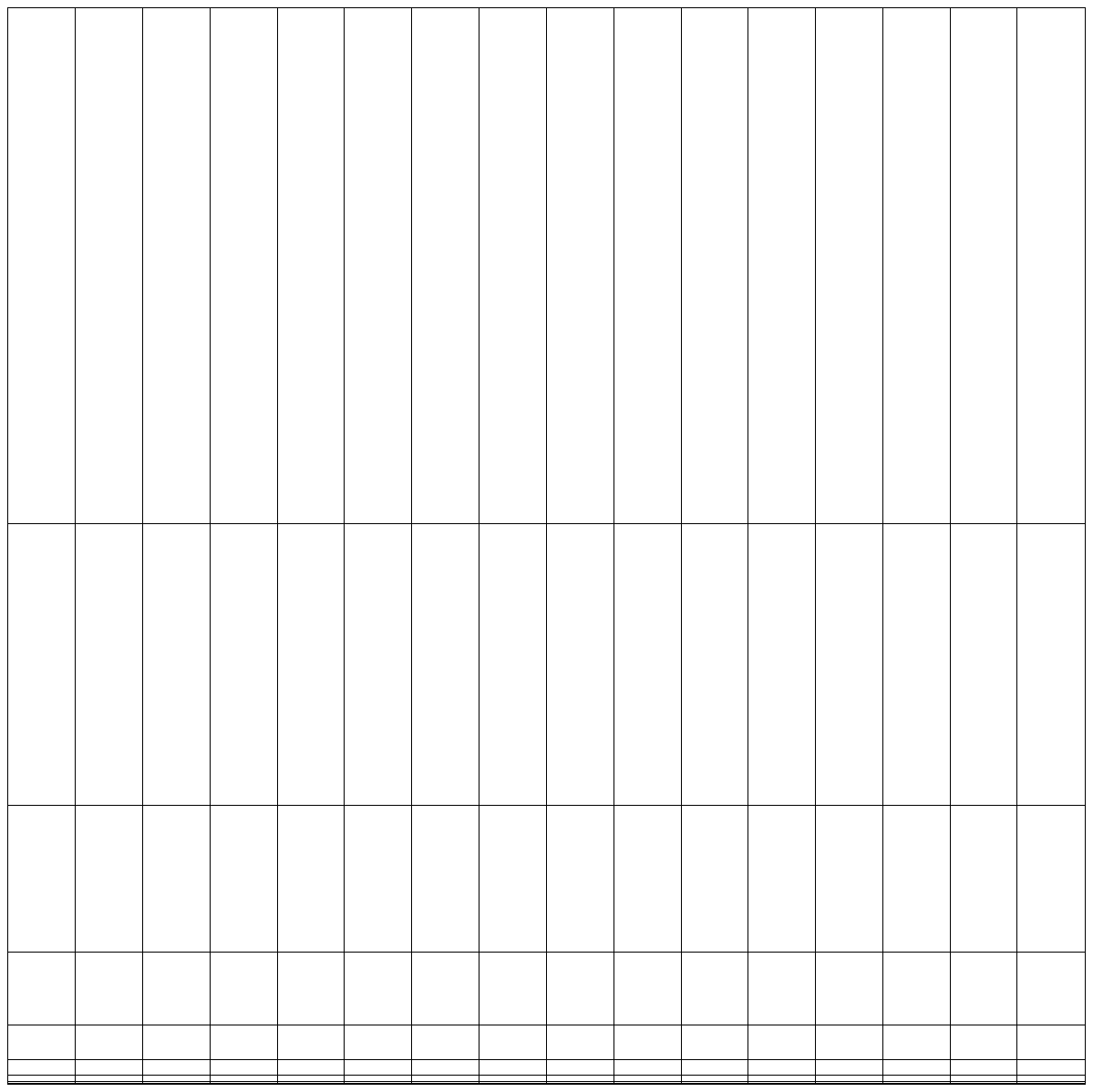}
\end{center}
\caption{
Graded meshes for the cylinder $\mathcal C_{\Y} = (0,1)\times (0,1)$.
In both cases, the mesh in $\Omega$ is uniform and of size $h_{\T_0}=\frac1{16}$.
The meshes in the extended direction are graded according to \eqref{graded_mesh} with $M=\frac1{h}$ and
$\gamma= \frac3{2s} +0.1$.  The \emph{left} mesh is for $s=0.8$ and the \emph{right} for $s=0.15$.}
\label{fig:1ds08}
\end{figure}

We assemble the matrix corresponding to the finite element discretization of \eqref{weighted_discrete_2} on each level.
The natural embedding $\mathbb V(\T_k) \to \mathbb V(\T_{k+1})$ for $k=0,\ldots,J-1$ gives us the prolongation matrix
between two consecutive levels. Notice that the prolongation in the $x'$-direction is obtained by standard averaging,
while in the extended direction the weights must be modified to take into account the grading of the mesh. The restriction
matrix is taken as the transpose of the prolongation matrix.

As discussed in Section~\ref{sec:MGLaps} we must use vertical line smoothers to attain efficiency of the
multigrid method. The tri-diagonal sub-matrix corresponding to one vertical line is inverted exactly by using the
built-in direct solver in MATLAB$^\copyright$.
Red-black ordering of the indices in the $x'$-direction is used to further improve the
efficiency of the line smoothers.
We perform three pre- and post-smoothing steps, \ie $m=3$.
We start with the zero initial guess and use as exit criterion that the $\ell^2$-norm of the
relative residual is smaller than $10^{-7}$.

Tables~\ref{table:1d} and \ref{table:2d} show the number of iterations for the implemented multigrid method for the one
and two dimensional problems, respectively. As we see, the method converges almost uniformly with respect
to the number of degrees of freedom. Notice that the number of iterations for $s=0.15$ is significantly larger than that
for the remaining tested cases. This can be explained by the fact that, as Theorem~\ref{TH:convergence_2} states,
the contraction factor depends on $\gamma \approx \tfrac1s$ and thus, we observe a preasymptotic regime where
the number of iterations grows. This is exactly the case for the one dimensional problem and we would expect
a similar behavior in the two dimensional case. However, since the extended problem is now in three dimensions, the
size of the problems grows rather quickly and thus our computational resources were not sufficient to deal with
the cases $h_{\T_{\Omega}} = \tfrac1{256}$ and $h_{\T_{\Omega}}=\tfrac1{512}$. In \S\ref{sub:meshmodif} we will propose a modification of the graded mesh in the extended direction to
address this issue.

\begin{table}[htdp]
  \begin{center}
    \begin{tabular}{|l|r|c|c|c|c|}
      \hline
      $h_{\T_{\Omega}}$           & DOFs     & $s=0.15$  & $s=0.3$ & $s=0.6$ & $s=0.8$ \\
      \hline
      $\tfrac1{16}$  & 289     & 7         & 6       & 5       & 5 \\
      \hline
      $\tfrac1{32}$  & 1,089   & 13        & 9       & 6       & 6 \\
      \hline
      $\tfrac1{64}$  & 4,225   & 25        & 10      & 6       & 6 \\
      \hline
      $\tfrac1{128}$ & 16,641  & 33        & 11      & 6       & 6 \\
      \hline
      $\tfrac1{256}$ & 66,049  & 37        & 10      & 6       & 6 \\
      \hline
      $\tfrac1{512}$ & 263,169 & 38        & 10      & 6       & 7 \\
      \hline
    \end{tabular}
  \end{center}
\vspace{0.2cm}
\caption{
Number of iterations for a multigrid method for the one dimensional fractional Laplacian using a line smoother
in the extended direction. The mesh in $\Omega$ is uniform of size $h_{\T_{\Omega}}$. The mesh in the extended
direction is graded according to \eqref{graded_mesh}.}
\label{table:1d}
\end{table}

\begin{table}[htdp]
  \begin{center}\begin{tabular}{|l|r|c|c|c|c|}
  \hline
  $h_{\T_{\Omega}}$            & DOFs       & $s=0.15$  & $s=0.3$ & $s=0.6$ & $s=0.8$ \\
  \hline
  $\tfrac1{16}$  & 4,913      & 10        & 7       & 6       & 5 \\
  \hline
  $\tfrac1{32}$  & 35,937     & 19        & 8       & 6       & 6 \\
  \hline
  $\tfrac1{64}$  & 274,625    & 34        & 9       & 6       & 6 \\
  \hline
  $\tfrac1{128}$ & 2,146,689  & 47        & 9       & 6       & 6\\
  \hline
  \end{tabular}
  \end{center}
\vspace{0.2cm}
\caption{
Number of iterations for a multigrid method for the two dimensional fractional Laplacian using a line smoother
in the extended direction. The mesh in $\Omega$ is uniform of size $h_{\T_{\Omega}}$. The mesh in the extended direction
is graded according to \eqref{graded_mesh}.}
\label{table:2d}
\end{table}

We also tested a point Gauss-Seidel smoother for the one dimensional case $\Omega = (0,1)$.
Except for the trivial case $h_{\T_{\Omega}} = 1/16$, the corresponding $\mathcal V$-cycle is not able to achieve
the desired accuracy in $200$ iterations.

\subsection{Multigrid methods on quasi-uniform meshes}

Even though the approximation of the Caffarelli-Silvestre extension of the fractional Laplace operator on quasi-uniform
meshes in the extended direction is suboptimal, let us use this problem to illustrate the convergence properties of the
multilevel method, developed in Section~\ref{sec:MGQunif}, for general $A_2$ weights. The setting is the same as in the
previous subsection but we use a point-wise Gauss-Seidel smoother. Tables~\ref{table:1duniform} and \ref{table:2duniform}
show the number of iterations with respect to the number of degrees of freedom and $s$. We see that the convergence is
almost uniform with respect to the number of unknowns as well as the parameter $s \in (0,1)$.

\begin{table}[ht]
  \begin{center}
  \begin{tabular}{|l|r|c|c|c|c|}
  \hline
  $h_{\T_{\Omega}}$             & DOFs    & $s=0.15$  & $s=0.3$ & $s=0.6$ & $s=0.8$ \\
  \hline
  $\tfrac1{16}$   & 289     & 12        & 13      & 13      & 14 \\
  \hline
  $\tfrac1{32}$   & 1,089   & 15        & 15      & 15      & 17 \\
  \hline
  $\tfrac1{64}$   & 4,225   & 15        & 16      & 16      & 17 \\
  \hline
  $\tfrac1{128}$  & 16,641  & 15        & 16      & 16      & 18  \\
  \hline
  $\tfrac1{256}$  & 66,049  & 15        & 15      & 16      & 18  \\
  \hline
  $\tfrac1{512}$  & 263,169 & 15        & 15      & 16      & 18 \\
  \hline
  \end{tabular}
  \end{center}
\vspace{0.2cm}
\caption{Number of iterations for a multigrid method with point-wise Gauss-Seidel smoothers on uniform meshes for the one dimensional fractional Laplacian.}
\label{table:1duniform}
\end{table}

\begin{table}[ht]
  \begin{center}
  \begin{tabular}{|l|r|c|c|c|c|}
  \hline
  $h_{\T_{\Omega}}$             & DOFs      & $s=0.15$  & $s=0.3$ & $s=0.6$ & $s=0.8$ \\
  \hline
  $\tfrac1{16}$   & 4,913     & 13        & 12      & 13      & 15 \\
  \hline
  $\tfrac1{32}$   & 35,937    & 15        & 15      & 15      & 17 \\
  \hline
  $\tfrac1{64}$   & 274,625   & 15        & 16      & 16      & 18 \\
  \hline
  $\tfrac1{128}$  & 2,146,689 & 15        & 16      & 16      & 19 \\
  \hline
  \end{tabular}
  \end{center}
\vspace{0.2cm}
\caption{Number of iterations for a multigrid method with point-wise Gauss-Seidel smoothers on uniform meshes for the two dimensional fractional Laplacian.}
\label{table:2duniform}
\end{table}

\subsection{Modified mesh grading}
\label{sub:meshmodif}

Examining the proof of Theorem~\ref{TH:convergence_2}, we realize that the critical step \eqref{aux_conv_4}
consists in the application of inequality \eqref{eq:mesh_rels}, namely $h_k'' \lesssim \gamma h_k'$,
which deteriorates as  $s$ becomes small because
  $\gamma>3/(1-\alpha) = 3/(2s)$. Numerically, this effect can be seen in Tables \ref{table:1d} and \ref{table:2d} where, for instance, the number
of iterations needed for $s=0.15$ is significantly larger than that for all the other tested values;
see the right mesh
for $s=0.15$ in Figure \ref{fig:1ds08}.
As a result, the contraction rate of Theorem~\ref{TH:convergence_2}
becomes $1-1/(1+C{\gamma}J)$.
Here we explore computationally how to overcome this issue. We construct a mesh such that the maximum meshsize in the
extended direction is uniformly bounded, with respect to $s$, by the
uniform meshsize in the $x'$-direction without changing
the ratio of degrees of freedom in $\Omega$ and the extended direction by more than a constant.

Let us begin with some heuristic motivation.
In order to control the aspect ratio $h_k''/ h_k'$ uniformly on $s \in (0,1)$,
we may apply some extra refinements to the largest elements in the $y$ direction,
increasing the number of degrees of freedom of $\T_{\Y}$ just by a constant.
We denote by $\tilde{\T_{\Y}}$ the resulting mesh and we notice that
$\V(\T_{\Y}) \subset \V(\tilde{\T}_{\Y})$. Thus, Galerkin
orthogonality implies
\begin{align*}
 \| \nabla (v - V_{\tilde{\T_{\Y}}} ) \|_{L^2(y^\alpha,\C_{\Y})}
& = \inf\left\{ \| \nabla (v - W) \|_{L^2(y^\alpha,\C_{\Y})} : W \in \V(\tilde{\T_{\Y}}) \right \}\\
&\leq \| \nabla (v - V_{\T_{\Y}})\|_{L^2(y^\alpha,\C_{\Y})} \lesssim \left(\#\T_\Y\right)^{-\frac1{n+1}}
\approx (\# \tilde{ \T_{\Y}} )^{-\frac1{n+1}}.
\end{align*}
We build on this idea through a modification of the mapping function below.

Let $F: (0,1)\to (0, \Y)$ be an increasing and differentiable function such that $F(0)=0$ and $F(1) = \Y$. By mapping a
uniform grid of $(0,1)$ via the function $F$, we can construct a graded mesh with mesh points given by
$y_l = F(l/M)$ for $l=1,..., M$; for instance, $F(\xi) = \Y \xi^\gamma$ yields \eqref{graded_mesh}. The mean value theorem
implies
\[
  y_{l+1}- y_l = \frac{F'(c_l)}{M} \leq \frac1M \max\left\{ |F'(\xi)|: \xi \in \left[\frac{l}M,\frac{l+1}M \right] \right\},
\]
which shows that the map of \eqref{graded_mesh} is not uniformly bounded with respect to $s$.

For this reason, we instead consider the following construction: Let $(\xi_\star, y_\star)\in (0,1)^2$, which we will call the \emph{transition point}, and define the mapping
\[
  F(\xi)=
  \begin{dcases}
    y_\star \Y \left( \frac{\xi}{\xi_\star} \right )^{\gamma},  & 0<\xi\leq \xi_\star, \\
    \Y\left( \frac{1-y_\star}{1-\xi_\star}(\xi - \xi_\star) + y_\star \right), & \xi_\star<\xi <1.
  \end{dcases}
\]

Over the interval $(0,\xi_\star)$ the mapping $F$ defines the same type of graded mesh but, over $(\xi_\star,1)$
it defines a uniform mesh. Let us now choose the transition point to obtain a bound on the derivative of $F$. We have
\begin{equation}
\label{modifiedF}
  F'(\xi) =
  \begin{dcases}
    \gamma \Y \frac{y_\star}{\xi_\star} \left( \frac{\xi}{\xi_\star} \right )^{\gamma-1}, & 0 < \xi \leq \xi_\star, \\
    \Y \frac{1-y_\star}{1-\xi_\star}, & \xi_\star < \xi <1,
  \end{dcases}
\end{equation}
so that
\[
 \mathcal F := \max_{\xi \in [0,1]} |F'(\xi)| = \Y \max \left \{\gamma \frac{y_\star}{\xi_\star}, \frac{1-y_\star}{1-\xi_\star} \right \}.
\]
Given $\xi_\star$ we choose $y_\star$ to have $\gamma \frac{y_\star}{\xi_\star} = \frac{1-y_\star}{1-\xi_\star}$, \ie
\begin{equation*}
  y_\star = \frac{1}{1+ \gamma \tfrac{1-\xi_\star}{\xi_\star}}.
\end{equation*}
this immediately yields $F \in \mathcal{C}^1([0,1])$ and, more importantly,
\[
 \mathcal F = \gamma\Y \frac{y_\star}{\xi_\star}
   = \Y \frac{\gamma}{\xi_\star + (1-\xi_\star)\gamma} \leq \Y \frac{1}{1-\xi_\star}.
\]
We can now choose $\xi_\star$ to gain control of $\mathcal F$. For instance, $\xi_\star = 0.5$ gives us that
$\mathcal F \leq 2$ and $\xi_\star = 0.75$ that $\mathcal F\leq 4$. In the
experiments presented below we choose $\xi_\star = 0.75$. The theory presented in \S~\ref{sec:MGLaps}
still applies.

\begin{figure}[htbp]
  \includegraphics[scale=0.4]{1dmeshs015.pdf}
  \hfil
  \includegraphics[scale=0.4]{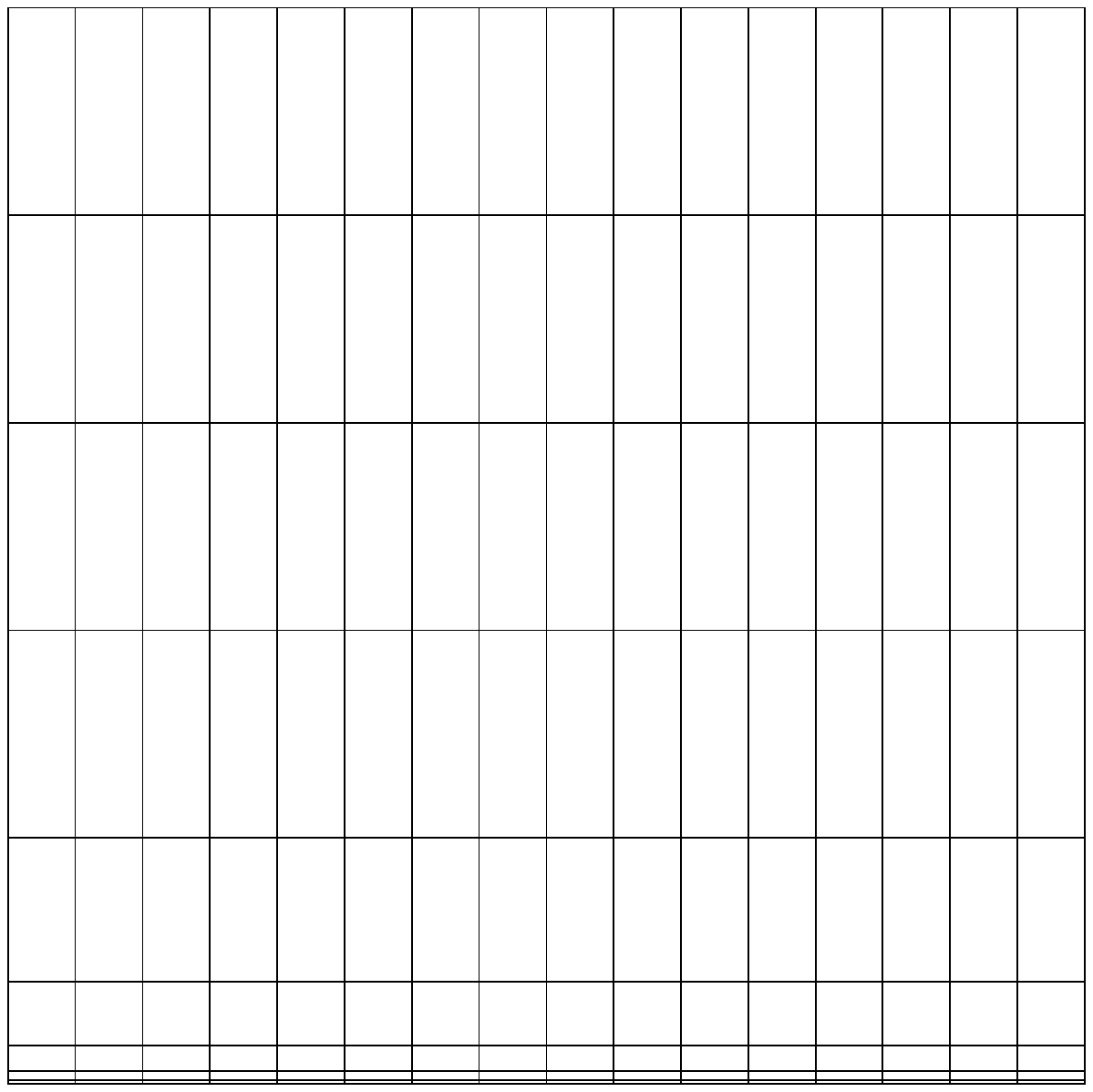}
\caption{Graded meshes for the extended domain $\mathcal C_{\Y} = (0,1)\times (0,1)$, $h_{\T_{\Omega}}=\tfrac1{16}$ and $s=0.15$.
\emph{Left}: The grading is according to \eqref{graded_mesh}. \emph{Right}: The grading is given by the map \eqref{modifiedF}.}
\label{fig:suki}
\end{figure}

\begin{figure}
  \includegraphics[scale=0.4]{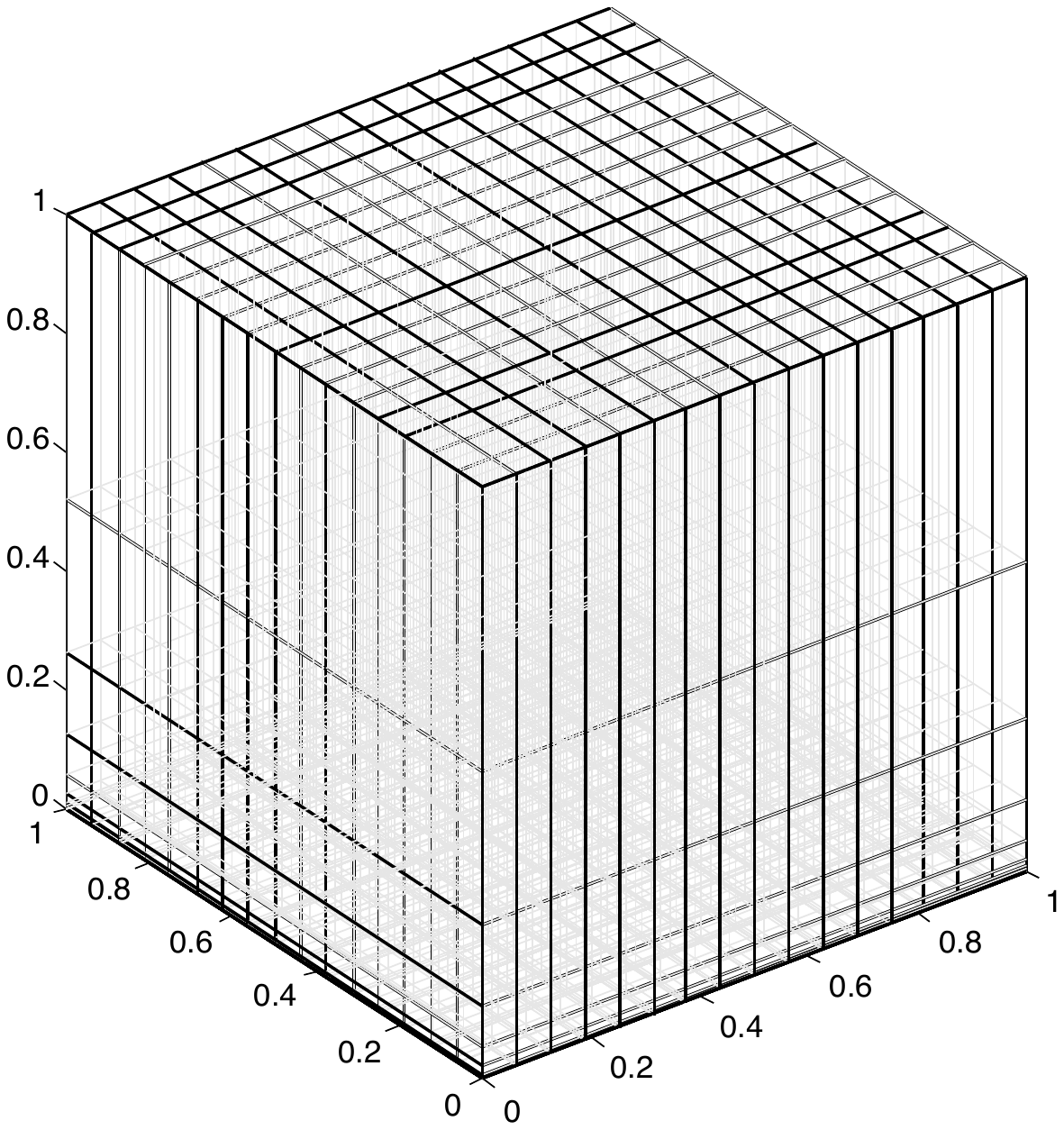}
  \hfil
  \includegraphics[scale=0.4]{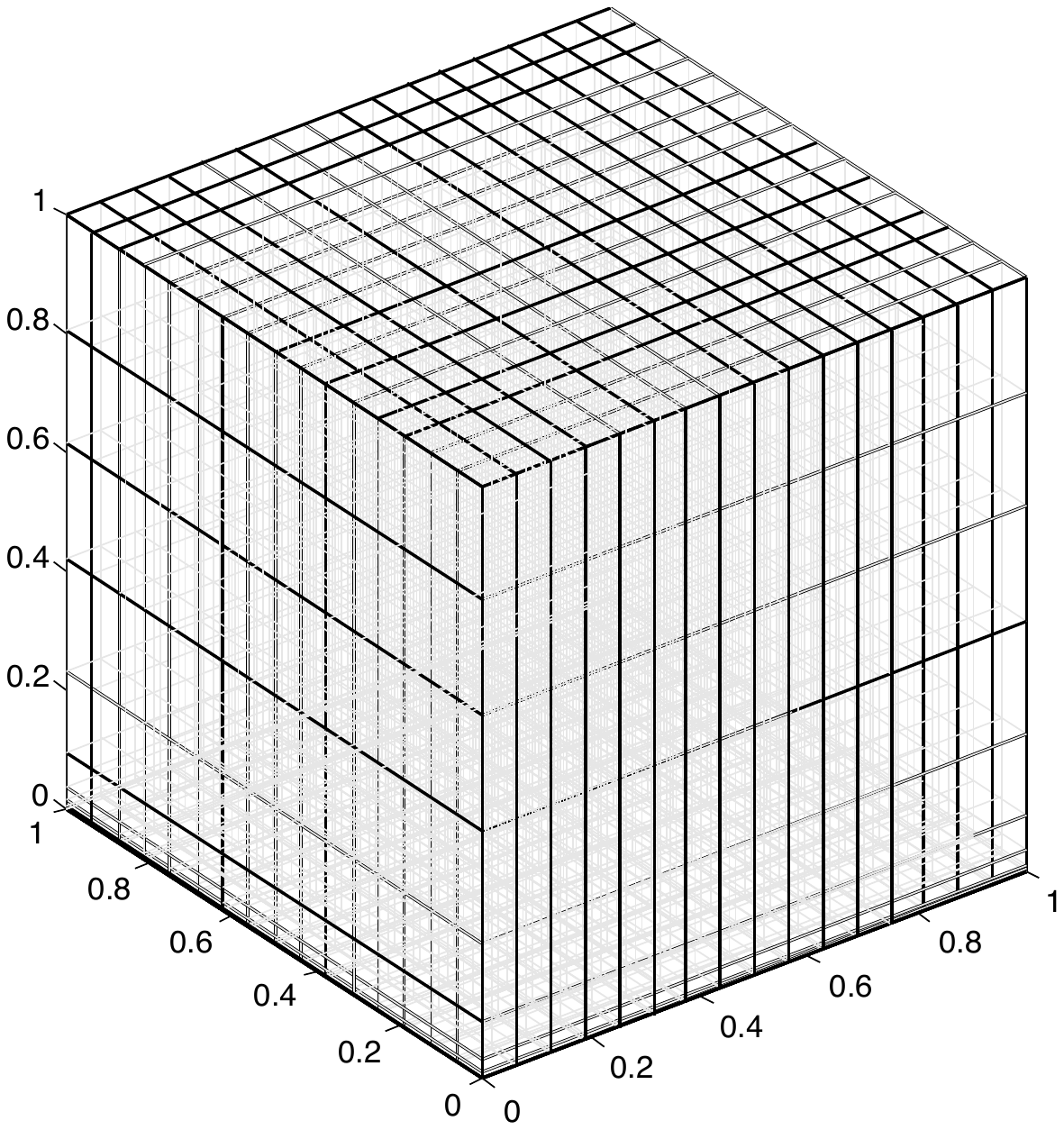}
\caption{Graded meshes for the extended domain $\mathcal C_{\Y} = (0,1)^2\times (0,1)$, $h_{\T_{\Omega}}=\tfrac1{16}$ and $s=0.15$. \emph{Left}: The grading is according to \eqref{graded_mesh}. \emph{Right}: The grading is given by the map \eqref{modifiedF}. }
\label{fig:urodi}
\end{figure}

To better visualize the effect of this modification Figures~\ref{fig:suki} and \ref{fig:urodi}, show the
original graded mesh, defined by \eqref{graded_mesh}, and the modified one obtained using \eqref{modifiedF}, in two
and three dimensions, respectively. The modified graded meshes have asymptotically the same distribution of points
near the bottom part of the cylinder and so they are also capable of capturing the singular behavior of the solution
$\ve$. However, near the top part, the aspect ratio is uniformly controlled by a factor $4$.
The modified mesh is only applied for $\gamma > 4$.
For $s=0.3$, $0.6$ and $0.8$, no modification is needed in the original mesh.

Upon constructing a mesh with this modification, we can develop a $\mathcal V$-cycle multigrid solver with vertical
line smoothers. Comparisons of this approach with the
setting of \S~\ref{sub:NumExpMGaniso} are shown
in Tables~\ref{tab:1Dcomparison} and \ref{tab:2Dcomparison}. From them we can conclude that the strong anisotropic
behavior of the mesh grading \eqref{graded_mesh} affects the performance of the $\mathcal V$-cycle multigrid with
vertical line smoothers. For the original graded meshes, there is a preasymptotic regime where the number of
iterations increases faster than $\log J$. The modification of the mesh proposed in \eqref{modifiedF} allows us to
obtain an almost uniform number of iterations for all problem sizes without sacrificing the accuracy of the method.
This is also evidenced by the computational time required to solve a problem with a fixed number of degrees of freedom.

\begin{table}[ht]
  \begin{center}
    \begin{tabular}{|l|r|c|c|c|c|c|c|}
    \hline
    $h_{\T_{\Omega}}$             & DOFs    &I(o) & I(m)  & E(o)    & E(m)    & CPU(o)  & CPU(m)  \\
    \hline
    $\tfrac1{16}$   & 289     & 7   & 7     & 0.1556  & 0.1739  & 0.0209  & 0.0554  \\
    \hline
    $\tfrac1{32}$   & 1,089   & 13  & 9     & 0.0828  & 0.0937  & 0.0664  & 0.0985  \\
    \hline
    $\tfrac1{64}$   & 4,225   & 25  & 10    & 0.0426  & 0.0485  & 0.2337  & 0.2720  \\
    \hline
    $\tfrac1{128}$  & 16,641  & 33  & 10    & 0.0216  & 0.0246  & 0.9041  & 0.4496  \\
    \hline
    $\tfrac1{256}$  & 66,049  & 37  & 11    & 0.0109  & 0.0124  & 4.8168  & 1.7051 \\
    \hline
    $\tfrac1{512}$  & 263,169 & 38  & 11    & 0.0055  & 0.0062  & 25.1351 & 7.3439  \\
    \hline
    \end{tabular}
  \end{center}
\vspace{0.2cm}
\caption{Comparison of the multilevel solver with vertical line smoother over two graded meshes for the one
dimensional fractional Laplacian, $s=0.15$. \emph{Legend}: The original mesh, given by \eqref{graded_mesh} is
denoted by o, whereas the modification proposed in \eqref{modifiedF} is denoted by m.
I -- number of iterations, E -- error in the energy norm, CPU -- cpu time ($s$).}
\label{tab:1Dcomparison}
\end{table}

\begin{table}[ht]
  \begin{center}
  \begin{tabular}{|l|r|c|c|c|c|c|c|}
  \hline
  $h_{\T_{\Omega}}$             & DOFs      & I(o)  & I(m)  & E(o)    & E(m)    & CPU(o)  & CPU(m)  \\
  \hline
  $\tfrac1{16}$   & 4,913     & 10    & 8     & 0.1070  & 0.1198  & 0.41    & 0.31 \\
  \hline
  $\tfrac1{32}$   & 35,937    & 19    & 11    & 0.0570  & 0.0646  & 4.76    & 2.95 \\
  \hline
  $\tfrac1{64}$   & 274,625   & 34    & 12    & 0.0294  & 0.0334  & 82.56   & 31.48 \\
  \hline
  $\tfrac1{128}$  & 2,146,689 & 47    & 13    & 0.0149  & 0.0170  & 892.65  & 269.63\\
  \hline
  \end{tabular}
  \end{center}
\vspace{0.2cm}
\caption{Comparison of the multilevel solver with vertical line smoother over two graded meshes for the two dimensional fractional Laplacian, $s=0.15$.
\emph{Legend}: The original mesh, given by \eqref{graded_mesh} is denoted by o, whereas the modification proposed in \eqref{modifiedF} is denoted by m.
I -- number of iterations, E -- error in the energy norm, CPU -- cpu time ($s$).}
\label{tab:2Dcomparison}
\end{table}

\bibliographystyle{plain}
\bibliography{biblio}
\end{document}